\newtheorem{theorem}{Theorem}[]
\newtheorem{lemma}[theorem]{Lemma}
\newtheorem{conjecture}[theorem]{Conjecture}
\newtheorem{definition}[theorem]{Definition}
\newtheorem{construction}[theorem]{Construction}
\newtheorem{remark}[theorem]{Remark}
\newtheorem{claim}[theorem]{Claim}
\newtheorem{question}[theorem]{Question}
\newtheorem{observation}[theorem]{Observation}
\newtheorem{fact}[theorem]{Fact}
\newcommand{\ma}{\mathcal}
\newcommand{\mr}{\mathscr}
\newcommand{\s}{\subseteq}
\newcommand{\fr}{\frac}
\newcommand{\lc}{\lceil}
\newcommand{\rc}{\rceil}
\newcommand{\e}{{\rm{ex}}}
\begin{document}
\title{Degenerate Tur\'an densities of sparse hypergraphs}
\author{Chong Shangguan and Itzhak Tamo\footnote{The authors are with the Department of Electrical Engineering - Systems, Tel Aviv University, Tel Aviv 6997801, Israel. Email: theoreming@163.com (C. Shangguan), zactamo@gmail.com (I. Tamo)}
}

\date{}
\maketitle

\begin{abstract}
\noindent For fixed integers $r>k\ge 2,e\ge 3$, let $f_r(n,er-(e-1)k,e)$ be the maximum number of edges in an $r$-uniform hypergraph in which the union of any $e$ distinct edges contains at least $er-(e-1)k+1$ vertices.
A classical result of Brown, Erd\H{o}s and S\'os in 1973 showed that $f_r(n,er-(e-1)k,e)=\Theta(n^k).$
The degenerate Tur\'an density is defined to be the limit (if it exists) $$\pi(r,k,e):=\lim_{n\rightarrow\infty}\frac{f_r(n,er-(e-1)k,e)}{n^k}.$$
Extending a recent result of Glock for the special case of $r=3,k=2,e=3$, we show that
$$\pi(r,2,3):=\lim_{n\rightarrow\infty}\frac{f_r(n,3r-4,3)}{n^2}=\frac{1}{r^2-r-1}$$ for arbitrary fixed $r\ge 4$.
For the more general cases $r>k\ge 3$, we manage to show
$$\frac{1}{r^k-r}\le\liminf_{n\rightarrow\infty}\frac{f_r(n,3r-2k,3)}{n^k}\le\limsup_{n\rightarrow\infty}\frac{f_r(n,3r-2k,3)}{n^k}\le \frac{1}{k!\binom{r}{k}-\frac{k!}{2}},$$
where the gap between the upper and lower bounds are small for $r\gg k$.

The main difficulties in proving these results are the constructions establishing the lower bounds. The first construction is recursive and purely combinatorial, and is based on a (carefully designed) approximate induced decomposition of the complete graph, whereas the second construction
is algebraic, and is proved by a newly defined matrix property which we call {\it strongly $3$-perfect hashing}.
\end{abstract}

\section{Introduction}\label{intro}

\noindent Tur\'an-type problems have been playing a central role in the field of extremal graph theory since Tur\'an \cite{turan} determined in 1941 the Tur\'an number of complete graphs.
In this work we focus on a classical hypergraph Tur\'an-type problem introduced by Brown, Erd\H{o}s and S\'os \cite{BES71} in 1973.

For an integer $r\ge 2$, an $r$-uniform hypergraph $\ma{H}$ (or $r$-graph, for short) on the vertex set $V(\ma{H})$, is a family of $r$-element subsets of $V(\ma{H})$,  called the {\it edges} of $\ma{H}$.
An $r$-graph is said to contain a {\it copy} of $\ma{H}$ if it contains $\ma{H}$ as a subhypergraph.
Furthermore, given a family $\mr{H}$ of $r$-graphs, an $r$-graph is said to be {\it $\mr{H}$-free} if it contains no copy of any member of $\mr{H}$.
The {\it Tur\'an number} $\e_r(n,\mr{H})$, is the maximum number of edges in an $\mr{H}$-free $r$-graph on $n$ vertices.
It can be easily shown
that the sequence
$\{\big(\binom{n}{r}\big)^{-1}\cdot\e_r(n,\mr{H})\}_{n=r}^{\infty}$ is bounded and non-increasing, and therefore converges \cite{kns}.
Hence, the {\it Tur\'an density} $\pi(\mr{H})$ of $\mr{H}$ is defined to be
$$\pi(\mr{H}):=\lim_{n\rightarrow\infty}\frac{\e_r(n,\mr{H})}{\binom{n}{r}}.$$
If $\pi(\mr{H})=0$ then $\mr{H}$ is called {\it degenerate}.
It is well-known  (see, e.g. \cite{Erdos-Turan,Keevashsurvey,Kovari-turanbound}) that $\mr{H}$ is degenerate if and only if it contains an $r$-partite $r$-graph,
where an $r$-graph is called {\it $r$-partite} if its vertex set admits a partition into $r$ disjoint parts $V_1,\ldots,V_r$,
such that every edge of it contains exactly one vertex from each vertex part $V_i$.
If $\mr{H}$ is degenerate and there exists a real number $\alpha\in(0,r)$ such that $\e_r(n,\mr{H})=\Theta(n^{\alpha})$, then the {\it degenerate  Tur\'an density} $\pi_d(\mr{H})$ of $\mr{H}$ is defined to be the limit (if it exists)
$$\pi_d(\mr{H}):=\lim_{n\rightarrow\infty}\frac{\e_r(n,\mr{H})}{n^{\alpha}},$$
where $\alpha$ is  called the {\it Tur\'an exponent\footnote{We remark that in a ``reverse'' direction, a recent breakthrough of Bukh and Conlon \cite{Bukhturanexponent} proved that
for any rational number $\alpha\in[1,2]$, there exists a finite family of $2$-graphs $\mr{H}$ such that $\e_2(n,\mr{H})=\Theta(n^{\alpha})$,
which resolves a well-known conjecture of Erd\H{o}s and Simonovits \cite{ErdosTuranExponents1}.
Similar results on $r$-graphs for all $r\ge 3$ were recently obtained by Fitch \cite{fitch2016rational}.
However, another conjecture of Erd\H{o}s and Simonovits \cite{ErdosTuranExponents2}, also known as the rational exponents conjecture,
which claims that in the statement above, it suffices to pick a simple graph rather than a finite family, is still widely open.}} of $\mr{H}$.
For example,  it is known (see, e.g. \cite{furedi2013history}) that $\pi_d(C_4)=\lim_{n\rightarrow\infty}\frac{\e_2(n,C_4)}{n^{3/2}}=\frac{1}{2}$, where $C_4$ is the cycle of length 4.

For a positive integer $n$ let $[n]:=\{1,\ldots,n\}$, and for any $X\s[n]$ let $\binom{X}{r}$ be the family  of $\binom{|X|}{r}$ distinct $r$-subsets of $X$.
For fixed integers $r\ge 2,e\ge 2,v\ge r+1$, let $\mr{G}_r(v,e)$ be the family of all $r$-graphs formed by $e$ edges and at most $v$ vertices; that is, $$\mr{G}_r(v,e)=\Big\{\ma{H}\s\binom{[v]}{r}:|\ma{H}|=e,|V(\ma{H})|\le v\Big\}.$$
Thus an $r$-graph is $\mr{G}_r(v,e)$-free if and only if the union of any $e$ distinct edges contains at least $v+1$ vertices.
Since such  $r$-graphs do not contain many edges (see  \eqref{BESbound} below),  they are also termed {\it sparse hypergraphs} \cite{sparse}.
Following previous papers on this topic (see, e.g. \cite{AlonShapira}) we will use the notation
 $$f_r(n,v,e):=\e_r(n,\mr{G}_r(v,e))$$
to denote the maximum number of edges in a $\mr{G}_r(v,e)$-free $r$-graph.

In 1973, Brown, Erd\H{o}s and S\'os \cite{BES71} initiated the study of the function $f_r(n,v,e)$, which has attracted considerable attention throughout the years.
More concretely, they showed that
\begin{equation}\label{BESbound}
  \begin{aligned}
   \Omega(n^{\fr{er-v}{e-1}})=f_r(n,v,e)=O(n^{\lc\fr{er-v}{e-1}\rc}).
  \end{aligned}
\end{equation}
The lower bound was proved by a standard probabilistic argument (now known as the alteration method, see, e.g. Chapter 3 of \cite{alon2016probabilistic}), and the upper bound follows from a double counting argument, which uses the simple fact that in a $\mr{G}_r(v,e)$-free $r$-graph, any set of $\lc\fr{er-v}{e-1}\rc$ vertices can be contained in at most $e-1$ distinct edges.
Improvements on \eqref{BESbound} for less general parameters were obtained in a series  of  works, see, e.g. \cite{AlonShapira,BrownTriangulated,BES71,Erdos1964,EPR,Erdosr=2,ge2017sparse,glock2018triple,Rodl,Rodlpacking,Ruzsa-Szemeredi,other2,other1,shangguan2019}.

In this paper we are interested in the special case where $k:=\frac{er-v}{e-1}$ is an integer greater than one. In such a case the order of  $f_r(n,v,e)$ is determined by \eqref{BESbound}, i.e.,
\begin{equation}\label{order}
  \begin{aligned}
    f_r(n,er-(e-1)k,e)=\Theta(n^k),
  \end{aligned}
\end{equation}
where  $v=er-(e-1)k$ and $2\le k\le r-1$. Thus for fixed integers $e\ge 2, r>k\ge 2$, it is natural to ask whether the limit
$$\pi_d(r,k,e):=\pi_d\big(\mr{G}_r(er-(e-1)k,e)\big)=\lim_{n\rightarrow\infty}\frac{f_r(n,er-(e-1)k,e)}{n^k}$$ exists,
where we call $\pi_d(r,k,e)$ {\it the degenerate Tur\'an density of sparse hypergraphs}.

For $e=2$ this question is already resolved, since  an $r$-graph is $\mr{G}_r(2r-k,2)$-free if and only if any pair of its edges share at most $k-1$ vertices,
therefore $f_r(n,2r-k,2)$ is equal to the maximum size of an $(n,r,k)$-packing, where an {\it $(n,r,k)$-packing} is a family of $r$-subsets of $[n]$ such that
any $k$-subset of $[n]$ is contained in at most one member of this family.
Clearly, the largest size of an $(n,r,k)$-packing cannot exceed $\binom{n}{k}/\binom{r}{k}$.
Moreover, it was shown by R{\"o}dl \cite{Rodlpacking} (see \cite{glock2016existence,keevash2014existence} for the current state-of-the-art) that for fixed $r,k$ and sufficiently large $n$,
this bound is essentially tight, up to a $1-o(1)$ factor (where $o(1)\rightarrow 0$ as $n\rightarrow\infty$). This implies that
$$\pi_d(r,k,2)=\lim_{n\rightarrow\infty}\frac{(1-o(1))\binom{n}{k}/\binom{r}{k}}{n^k}=\frac{1}{r\cdots(r-k+1)}.$$

For $e\geq 3$ not much is known, and only recently the existence of $\pi_d(3,2,3)$ was resolved.
Brown, Erd\H{o}s and S\'os \cite{BES71}  posed the following conjecture (see also \cite{BrownTriangulated}).
\begin{conjecture}[Brown, Erd\H{o}s and S\'os \cite{BES71}]\label{besconjecture}
  The degenerate Tur\'an density
  $$\pi_d(3,2,e)=\lim_{n\rightarrow\infty}\fr{f_3(n,e+2,e)}{n^2}$$ exists for every fixed $e\ge 3$.
\end{conjecture}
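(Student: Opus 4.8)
\medskip
\noindent\emph{A proof strategy.} The plan is the two-sided attack that seems to be forced here, because the soft arguments that yield ordinary Tur\'an densities are unavailable at the scale $n^2$: induced-subgraph averaging only shows that $f_3(n,e+2,e)/\binom n3$ is non-increasing, which is vacuous since that ratio tends to $0$, and the obvious ways of combining small configurations (disjoint unions, blow-ups of single edges) either fail to preserve $\mr G_3(e+2,e)$-freeness or produce far too few edges. So one cannot avoid identifying the constant $c_e:=\pi_d(3,2,e)$ and proving the matching bounds $c_e\le\liminf f_3(n,e+2,e)/n^2$ and $\limsup f_3(n,e+2,e)/n^2\le c_e$. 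Here $k=(3e-(e+2))/(e-1)=2$, so \eqref{BESbound} already gives the crude estimate $f_3(n,e+2,e)\le\frac{e-1}{6}n^2(1+o(1))$, since each pair of vertices lies in at most $e-1$ edges and each edge carries three pairs; Glock's value $c_3=\tfrac15$ shows this is far from sharp even for $e=3$.

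\medskip
\noindent The heart of the matter is the lower bound, i.e.\ the construction, which for $e=3$ is exactly the content extended by the present paper. For $e\ge 4$ I would try to generalize the two mechanisms advertised in the abstract. The first is recursive: build a near-extremal $\mr G_3(e+2,e)$-free $3$-graph from an approximate induced decomposition of $K_n$ (or of a blow-up of a small optimal seed configuration), whose blocks are rescaled copies of extremal configurations on fewer vertices, and check that $e$ edges drawn from the blocks can never collapse onto $\le e+2$ vertices, using that each block is itself $\mr G_3(\cdot,\cdot)$-free and that distinct blocks meet in a tightly controlled pattern. The second is algebraic: place the vertices in a vector space or a subset of a finite field and take the edges to be the solution set of a carefully engineered low-degree polynomial constraint, so that ``any $e$ edges span at least $e+3$ vertices'' becomes a non-vanishing statement about small Vandermonde-type or hashing-type matrices --- the role played by strongly $3$-perfect hashing in the $e=3$ case. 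Making either mechanism yield the \emph{correct} constant for every $e$ is where I expect the real difficulty to lie; even guessing $c_e$ in closed form is not obvious.

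\medskip
\noindent For the matching upper bound I would pass to the link graphs $L_v\subseteq\binom{V(\ma H)\setminus\{v\}}{2}$, so that $\sum_v|L_v|=3|\ma H|$ and it suffices to bound $\sum_v|L_v|$. The freeness hypothesis imposes strong local restrictions --- no $e+2$ vertices support $e$ edges, so no small set is dense, and every bounded configuration (a few edges through a common vertex, a short tight path or cycle, several $K_4^{(3)}$'s sharing vertices, and so on) whose vertex set has size at most $e+2$ is forbidden. The aim is a supersaturation/stability statement: any $3$-graph with more than $(c_e+\varepsilon)n^2$ edges must contain one of finitely many forbidden local patterns. For $e=3$ this is a short structural argument; for general $e$ I would hope to push it through either by a finite, $e$-dependent case analysis of the $e$-tuples of edges that can collapse onto at most $e+2$ vertices, or, more systematically, by casting the optimization as a flag-algebra problem over the small $3$-graphs involved.

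\medskip
\noindent Combining the construction with this upper bound yields the limit. I expect the principal obstacle to be the construction, and within it the need to control the interaction of many near-extremal pieces --- or of a single configuration with shifted copies of itself under a near-decomposition --- so that no $e$ of their edges ever land on $e+2$ vertices; this is precisely where both the recursive and the algebraic approaches of the present paper concentrate their effort, and a genuinely new idea seems to be needed to go past $e=3$.
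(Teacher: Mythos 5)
The statement you were asked about is Conjecture~\ref{besconjecture}, which the paper does \emph{not} prove: it is an open conjecture of Brown, Erd\H{o}s and S\'os, resolved in the paper (via Glock \cite{glock2018triple}) only for $e=3$, with the present paper's new contributions being the $r\ge 4$, $e=3$ cases and bounds for $k\ge 3$. Since there is no proof in the paper to compare against, the right way to assess your submission is simply whether it proves the conjecture — and it does not, nor does it claim to. What you have written is a candid and largely accurate roadmap: you correctly compute the Tur\'an exponent $k=2$, correctly extract the crude upper bound $(e-1)n^2/6$ from the codegree argument underlying \eqref{BESbound}, correctly observe that the averaging argument that proves existence of the ordinary Tur\'an density is vacuous at scale $n^2$, and correctly identify the two mechanisms (recursive near-decompositions and algebraic/hashing constructions) that the paper uses for $e=3$. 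You also correctly note that the central obstruction is that the constant $c_e$ is unknown in closed form for $e\ge 4$ and that no construction attaining it is known.

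The genuine gap, then, is the whole conjecture: your proposal supplies neither a candidate value of $c_e$, nor a construction meeting it, nor an upper bound tight enough to match any particular construction. The two concrete sub-steps you flag — controlling block interactions so that no $e$ edges land on $\le e+2$ vertices, and turning ``any $e$ edges span $\ge e+3$ vertices'' into a non-vanishing condition on hashing-type matrices — are exactly where the $e=3$ arguments already become delicate, and neither your link-graph/supersaturation sketch nor your flag-algebra remark is developed to the point of yielding a bound. In short: your write-up is a sensible survey of the obstacles and of the toolkit the paper deploys for $e=3$, but it does not constitute a proof, and the conjecture remains open for every $e\ge 4$ (indeed even the existence of the limit, let alone its value, is unknown there). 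To turn this into progress you would need, at minimum, a candidate extremal configuration for some specific $e\ge 4$ together with a matching weighted-counting upper bound in the style of Section~\ref{upper}.
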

\noindent For the first case $e=3$, they were able to show that $1/6\le \pi_d(3,2,3)\le 2/9$.
To the best of our knowledge, for more than forty years no significant improvement was made until recently Glock \cite{glock2018triple} closed the gap by showing that
\begin{equation}\label{glock}
  \begin{aligned}
    \pi_d(3,2,3)=\lim_{n\rightarrow\infty}\fr{f_3(n,5,3)}{n^2}=\fr{1}{5}.
  \end{aligned}
\end{equation}

In this paper we continue this line of research, and  in the spirit of \eqref{order} and Conjecture \ref{besconjecture} we consider the following question.
\begin{question}\label{openquestion}
  For fixed integers $r>k\ge 2, e\ge 3$, does the limit
  $$\pi_d(r,k,e)=\lim_{n\rightarrow\infty}\fr{f_r(n,er-(e-1)k,e)}{n^k}$$ exist?
  If so, what is the value of $\pi_d(r,k,e)$?
\end{question}

In general this question is widely open.
The authors of \cite{BES71} who established \eqref{order} did not try  to optimize the coefficient of $n^k$, however a careful analysis of their lower bound yields to
\begin{equation}\label{inequality}
  \begin{aligned}
    \sqrt[e-1]{\frac{(er-(e-1)k)!}{2\binom{er-(e-1)k}{r}\cdot\binom{\binom{er-(e-1)k}{r}}{e}\cdot(r!)^e}}
    \le\pi_d(r,k,e)\le\frac{e-1}{r\cdots(r-k+1)},
  \end{aligned}
\end{equation}
\noindent where the upper bound follows from the observation  that any $k$-subset of $[n]$ is contained in at most $e-1$ edges of a $\mr{G}_r(er-(e-1)k,e)$-free $r$-graph, implying that $f_r(n,er-(e-1)k,e)\le(e-1)\big(\binom{n}{k}/\binom{r}{k}\big)$.
Note that \eqref{inequality} states in fact lower and upper bounds on  $\liminf_{n\rightarrow\infty}\fr{f_r(n,er-(e-1)k,e)}{n^k}$ and $\limsup_{n\rightarrow\infty}\fr{f_r(n,er-(e-1)k,e)}{n^k}$ respectively,
since it is not known  whether $\pi_d(r,k,e)$ exists.
However, to simplify the notations we keep  \eqref{inequality} in its current form, and in the sequel we will frequently use abbreviations of this type.

Our main results are  introduced in the next two subsections, and they include the determination of $\pi_d(r,2,3)$ for any fixed $r\ge 4$,
and new lower and upper bounds for $\pi_d(r,k,3)$ for any fixed $r>k\ge 3$.

\vspace{5pt}

\noindent {\it\textbf{Notations.}} We use standard asymptotic notations $\Omega(\cdot),\Theta(\cdot),O(\cdot)$ and $o(\cdot)$ as $n\rightarrow\infty$,
where for functions $f=f(n)$ and $g=g(n)$, we write $f=O(g)$ if there is a constant $c_1$ such that $|f|\le c_1|g|$;
we write $f=\Omega(g)$ if there is a constant $c_2$ such that $|f|\ge c_2 |g|$;
we write $f=\Theta(g)$ if $f=O(g)$ and $f=\Omega(g)$ hold simultaneously;
finally, we write $f=o(g)$ if $\lim_{n\rightarrow\infty}(f/g)=0$.

\subsection{The exact value of $\pi_d(r,2,3)$}

\noindent In an $(n,r,2)$-packing, any member of $\binom{[n]}{2}$ is contained in at most one member of $\binom{[n]}{r}$, therefore one can easily verify that such a packing is also $\mr{G}_r(3r-4,3)$-free. This implies that for all fixed $r\ge 4$ the result of R{\"o}dl \cite{Rodlpacking}, written in the above notation, is \begin{equation}
\label{tikitaka}
 \pi_d(r,2,3)\ge\fr{1}{r^2-r}.
\end{equation}
We  will give a tighter bound than \eqref{tikitaka} by showing that approximately, a ($\fr{1}{r^2-r-1}$)-fraction of the $2$-subsets in $[n]$ can be contained in two $r$-subsets, while the resulting hypergraph still has the  $\mr{G}_r(3r-4,3)$-free property (see Remark \ref{remark}). As a consequence, we obtain the following  improvement on the above lower bound.

\begin{theorem}\label{maintheorem}
  For any fixed integer $r\ge 4$,
  $$\pi(r,2,3)=\lim_{n\rightarrow\infty}\fr{f_r(n,3r-4,3)}{n^2}=\fr{1}{r^2-r-1}.$$
\end{theorem}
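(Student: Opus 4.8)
\emph{Upper bound.} We first prove $\limsup_n f_r(n,3r-4,3)/n^2\le 1/(r^2-r-1)$. Let $\ma{H}\s\binom{[n]}{r}$ be $\mr{G}_r(3r-4,3)$-free, so that the union of any three of its edges has at least $3r-3$ vertices. Two elementary facts follow at once. First, no two vertices lie in three edges of $\ma{H}$, since three edges through a common pair span at most $3(r-2)+2=3r-4$ vertices; hence every pair of vertices is in at most two edges. Second, each edge $e\in\ma{H}$ meets at most one other edge of $\ma{H}$ in exactly two vertices: if $|e\cap e'|=|e\cap e''|=2$ for distinct $e',e''\in\ma{H}$, then $|e\cup e'\cup e''|\le (2r-2)+(r-2)=3r-4$. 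Thus ``meeting in exactly two vertices'' is a partial matching on $\ma{H}$, so the number $P_2$ of pairs lying in exactly two edges satisfies $P_2\le|\ma{H}|/2$, while the number $P_1$ of pairs lying in exactly one edge satisfies $P_1+P_2\le\binom n2$. Double counting incidences between edges and the pairs they contain, $\binom r2|\ma{H}|=P_1+2P_2=(P_1+P_2)+P_2\le\binom n2+|\ma{H}|/2$, which yields $|\ma{H}|\le\binom n2/(\binom r2-\tfrac12)=\frac{n(n-1)}{r^2-r-1}$.

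\emph{Reduction of the lower bound.} The computation above suggests that a near-extremal $\ma{H}$ should have essentially all its edges in \emph{edge-pairs}: pairs of distinct edges $A,A'$ with $|A\cap A'|=2$. As a graph on the $2r-2$ vertices $A\cup A'$, an edge-pair is two copies of $K_r$ sharing exactly one edge; it covers $2\binom r2-1=r^2-r-1$ of the pairs on $A\cup A'$ and misses precisely the $(r-2)^2$ pairs between $A\setminus A'$ and $A'\setminus A$ --- a copy of the complete bipartite graph $K_{r-2,r-2}$, which I will call the \emph{deficiency} of the edge-pair. If $\ma{H}$ is a disjoint union of edge-pairs, then extending the case analysis above one checks that $\ma{H}$ is $\mr{G}_r(3r-4,3)$-free if and only if (a) the graphs of the edge-pairs are pairwise edge-disjoint in $K_n$, and (b) no deficiency pair of any edge-pair is contained in any edge of $\ma{H}$. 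Since (a) forces the $r^2-r-1$ pairs covered by distinct edge-pairs to be pairwise disjoint, it therefore suffices to construct, for every large $n$, a family of $(1-o(1))\binom n2/(r^2-r-1)$ edge-disjoint edge-pairs in $K_n$ whose deficiencies all lie outside the subgraph they cover: the associated $r$-graph $\ma{H}$ then has $(1-o(1))\,n(n-1)/(r^2-r-1)$ edges, as predicted by Remark \ref{remark} (the $o(n^2)$ uncovered pairs can be discarded, or partly handled by a near-optimal $(n,r,2)$-packing as in \eqref{tikitaka}).

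\emph{The construction.} The subtlety is that there are $\Theta(n^2)$ edge-pairs, each with a constant-size deficiency that must stay uncovered, so all these $\Theta(n^2)$ deficiency pairs must fit inside a ``leave'' of only $o(n^2)$ pairs --- the deficiencies have to be highly concentrated. I would therefore first fix a sparse host graph $U$ on $[n]$ with $|U|=o(n^2)$ that is robustly rich in copies of $K_{r-2,r-2}$ reaching all of $[n]$ (for $r=4$, a $\Theta(\sqrt n)$-regular pseudorandom graph already has $\Theta(n^2)$ usable $4$-cycles, the two diagonals of each lying outside $U$). Then I would lay down edge-pairs recursively, extending Glock's construction for $r=3$: each edge-pair uses a copy of $K_{r-2,r-2}$ inside $U$ as its non-shared sides $A\setminus A'$ and $A'\setminus A$ and a fresh pair as its shared edge $A\cap A'$, so that its deficiency automatically lies in $U$; the shared pair is chosen so that none of the $r^2-r-2$ newly covered pairs lies in $U$ and so that edge-disjointness from all earlier edge-pairs is preserved; the still-untouched part of $K_n$ is then handled by a smaller instance of the same construction, so that in total $(1-o(1))\binom n2$ of the pairs get covered.

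\emph{Main obstacle.} The heart of the proof is making this iteration run: one must simultaneously (i) keep all edge-pairs edge-disjoint in $K_n$; (ii) confine each of the $\Theta(n^2)$ deficiencies to the prescribed $o(n^2)$-size leave $U$; and (iii) still cover $(1-o(1))\binom n2$ pairs so that, by (a), the number of edge-pairs --- hence $|\ma{H}|$ --- reaches the target value. Requirements (ii) and (iii) work against each other --- one is packing $\Theta(n^2)$ copies of $K_{r-2,r-2}$ into a sparse graph while insisting that the $r^2-r-1$ pairs covered by every edge-pair avoid that graph --- and it is precisely the bookkeeping needed to reconcile them that forces the careful recursive design of $U$ and of the round-by-round placement of the edge-pairs. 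This is where essentially all the work of the lower bound lies.
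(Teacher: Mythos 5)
Your upper bound follows the same counting scheme as the paper's, but one step is false as written. From ``meeting in exactly two vertices is a partial matching on the edges of $\ma{H}$'' you conclude $P_2\le|\ma{H}|/2$; this silently assumes that whenever a vertex-pair has codegree two, the two edges containing it intersect in \emph{exactly} two vertices. The $\mr{G}_r(3r-4,3)$-free hypothesis constrains only triples of edges, so a single pair of edges may satisfy $|A\cap B|\ge 3$, and such a pair contributes $\binom{|A\cap B|}{2}>1$ codegree-two vertex-pairs to $P_2$ (e.g.\ two $4$-sets sharing three vertices give $P_2=3$ with $|\ma{H}|=2$). The paper repairs exactly this point: it first deletes the at most $n$ edges containing a degree-one vertex (Lemma \ref{naivelemma2}) and then shows that in the cleaned hypergraph $|A\cap B|=k$ whenever a $k$-set has codegree two. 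Alternatively, note that if $|A\cap B|\ge3$ then every vertex of $A\triangle B$ has degree one, so such edge-pairs number $O(n)$ and perturb $P_2$ by only $O(n)$. Either patch is routine, but you need one of them.

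The lower bound is where the theorem actually lives, and your proposal does not prove it. Your structural reduction is correct and nicely stated: a disjoint union of ``edge-pairs'' whose shadows tile $K_n$ edge-disjointly, with every deficiency $K_{r-2,r-2}$ avoiding all edges, is indeed $\mr{G}_r(3r-4,3)$-free, and an optimal example must look roughly like this (consistently with Remark \ref{remark}). But everything after that is a plan, not a proof: you must pack $(1-o(1))\binom{n}{2}/(r^2-r-1)$ edge-disjoint copies of the edge-pair shadow into $K_n$ while forcing all $\Theta(n^2)$ deficiency pairs into an $o(n^2)$-size leave $U$ that must simultaneously contain the bipartite graph between $A\setminus A'$ and $A'\setminus A$ yet avoid the covered pairs inside each side --- and you yourself write that reconciling these constraints ``is where essentially all the work of the lower bound lies.'' No existence argument for such a host $U$ or such a packing is given, and it is far from clear that a greedy/recursive placement as sketched terminates with only $o(n^2)$ waste. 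This is precisely the difficulty the paper circumvents by taking a different route: it recurses on the uniformity $r$ rather than decomposing $K_n$ into edge-pairs. Given an optimal almost linear $\mr{G}_r(3r-4,3)$-free $r$-graph $\ma{H}$ on $s$ vertices, it builds a component graph $G_t$ out of $t$ clique ``petals'' each carrying a copy of $\ma{H}$ plus a small clique ``core,'' lifts each $r$-edge to an $(r+1)$-edge by appending a core vertex (Lemma \ref{F(G_t)}), and then tiles $K_n$ with copies of $G_t$ using the \emph{induced} packing lemma (Lemma \ref{packinglemma}), whose properties (i) and (ii) are exactly what prevents three lifted edges from two different copies from collapsing onto $3r-1$ vertices. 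The base case $r=3$ is Glock's theorem. If you want to pursue your edge-pair decomposition instead, you must supply the packing theorem it relies on; as it stands the construction, i.e.\ the substance of the theorem, is missing.
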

Note that Theorem \ref{maintheorem} extends \eqref{glock} from $r=3$ to arbitrary fixed $r\ge 4$.
To prove this theorem it suffices to show that $\limsup_{n\rightarrow\infty}\fr{f_r(n,3r-4,3)}{n^2}\le\fr{1}{r^2-r-1}$ and  $\liminf_{n\rightarrow\infty}\fr{f_r(n,3r-4,3)}{n^2}\ge\fr{1}{r^2-r-1}$.
The upper bound is a special case of the upper bound stated in Theorem \ref{secondtheorem} below, which will be discussed later.
The main difficulty in proving Theorem \ref{maintheorem} is the construction which establishes the lower bound.
In what follows we  briefly review the main ideas behind it.

Generally speaking, the lower bound is obtained by a recursive construction (recursion on the uniformity $r$) and a carefully designed approximate induced decomposition of $K_n$,
the complete graph on $n$ vertices.
Given a finite graph $G$, a {\it $G$-packing} in $K_n$ is simply a family of edge disjoint copies of $G$ in $K_n$.
We will make use of the following lemma, which was proved to be very useful in many other combinatorial constructions (see, e.g. \cite{Alon1,Alon3,Frankl1987,FurediCance2012,glock2018triple}).
\begin{lemma}[Graph packing lemma, see Theorem 2.2 \cite{Frankl1987} or Theorem 3.2 \cite{Alon2}]\label{packinglemma}
  Let $G$ be any fixed graph with $e$ edges and $\epsilon>0$ be any small constant.
  Then there is an integer $n_0$ such that for any $n>n_0$, there exists a $G$-packing $\mr{G}=\{G^1,\ldots,G^l\}$ in $K_n$ with
  $$l\ge(1-\epsilon)\fr{n^2}{2e}$$ edge disjoint copies of $G$ such that
  \begin{itemize}
    \item [\rm{(i)}] any two distinct copies of $G$ share at most two vertices, i.e., $|V(G^i)\cap V(G^j)|\le 2$ for any $1\le i\neq j\le l$;
    \item [\rm{(ii)}] if two distinct copies  $G^i, G^j$ share two vertices $a,b$, then $\{a,b\}$ is neither an edge of $G^i$, nor $G^j$.
  \end{itemize}
\end{lemma}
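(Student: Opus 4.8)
The plan is to use the semi-random (R\"odl nibble) method. Write $v:=|V(G)|$, label $V(G)=\{1,\ldots,v\}$, and recall $G$ has $e$ edges. Every injection $\phi\colon V(G)\to[n]$ gives a copy $G_\phi$ of $G$ in $K_n$ with vertex set $S_\phi=\phi(V(G))$ and edge set $E(G_\phi)\s\bi{S_\phi}{2}$, and a $G$-packing is a family of such copies with pairwise disjoint edge sets. First I would note that the conjunction of conditions (i) and (ii) is equivalent to requiring that any two distinct copies $G_\phi,G_\psi$ in the family satisfy: (A) $E(G_\phi)\cap E(G_\psi)=\emptyset$; (B) $|S_\phi\cap S_\psi|\le 2$; and (C) there is no pair $\{a,b\}\in E(G_\phi)$ with $\{a,b\}\s S_\psi$. (Two copies sharing an edge would already violate (C), so (C) implies (A); the remaining implications are immediate.) Thus it suffices to produce $(1-\ep)\fr{n^2}{2e}$ copies that are pairwise (A), (B), (C)-compatible, and since $\bi{n}{2}/e$ is an upper bound for any $G$-packing, this is a near-optimal packing.

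The heart of the argument is to realize such a family as a matching in a suitable auxiliary hypergraph $\ma{A}$, so that the nibble applies. I would take the vertex set of $\ma{A}$ to be the disjoint union of three blocks: the pairs $\bi{[n]}{2}$ (responsible for (A)), the triples $\bi{[n]}{3}$ (responsible for (B)), and a block of \emph{flags} $\{(\{a,b\},c):\{a,b\}\in\bi{[n]}{2},\,c\in[n]\setminus\{a,b\}\}$ (responsible for (C)). To each copy $G_\phi$ associate the hyperedge
$$D(G_\phi)\ :=\ E(G_\phi)\ \cup\ \bi{S_\phi}{3}\ \cup\ F(G_\phi),$$
where $F(G_\phi)$ collects every flag $(\{a,b\},c)$ such that either $\{a,b\}\in E(G_\phi)$ and $c\notin S_\phi$, or $\{a,b,c\}\s S_\phi$ and $\{a,b\}\notin E(G_\phi)$. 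The two clauses defining $F(G_\phi)$ deliberately split the ``edge'' role of a pair $\{a,b\}$ from its ``spanned pair'' role; this asymmetry is exactly what turns condition (C) — which treats these two roles differently — into a disjointness condition. A short case check then shows that for distinct $\phi,\psi$ one has $D(G_\phi)\cap D(G_\psi)=\emptyset$ if and only if (A), (B) and (C) all hold, so that a matching in $\ma{A}$ is precisely a $G$-packing satisfying (i) and (ii).

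It remains to run the nibble on $\ma{A}$. Because (i), (ii) are vacuous for a single copy, we only need a matching covering almost all of the pair-block $\bi{[n]}{2}$. Every pair $\{x,y\}$ lies in $(1+o(1))D$ hyperedges, where $D=2e\,n^{v-2}(1+o(1))$ counts the embeddings sending an edge of $G$ onto $\{x,y\}$; the flag-vertices have comparable degree and the triple-vertices a smaller one, every pairwise codegree is $o(D)$ (forcing two vertices of $\ma{A}$ into one copy costs at least a factor $n$), and each hyperedge meets the pair-block in exactly $e$ vertices. The standard near-perfect matching theorem behind the nibble (Pippenger--Spencer / Frankl--R\"odl), in the form that covers almost all of a prescribed asymptotically regular vertex class, then produces a matching covering $(1-\ep')\bi{[n]}{2}$ of the pairs, i.e.\ $(1-\ep')\bi{n}{2}/e\ge(1-\ep)\fr{n^2}{2e}$ hyperedges after adjusting $\ep'$; these are the required copies, and $n_0$ is whatever the nibble demands in terms of $G$ and $\ep$.

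I expect the main obstacle to be precisely the encoding of condition (ii). Since (ii) distinguishes, for a pair of vertices shared by two copies, whether it is an edge of one of them or merely spanned by its vertex set, it cannot be captured by enlarging the triple-block, and one is driven to a genuinely asymmetric gadget such as the flags above — or, equivalently, to run a \emph{conflict-free} version of the nibble directly on the plain edge-disjointness hypergraph, declaring two copies to be in conflict exactly when they violate (i) or (ii) and checking that this conflict system is sufficiently sparse. A secondary nuisance is that $\ma{A}$ is not regular across its three blocks, so one must track degrees block by block and use the form of the nibble that only guarantees coverage of the regular pair-block; and when $|V(G)|$ is very small the flag gadget must be refined to keep hyperedges short enough for the nibble hypotheses. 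Neither point is conceptually deep once the right auxiliary structure is fixed; identifying that structure is the crux.
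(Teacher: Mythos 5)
First, a point of reference: the paper does not prove this lemma at all --- it is imported verbatim from the literature (Frankl--F\"uredi, Theorem 2.2, and Alon, Theorem 3.2), so there is no in-paper proof to compare against. Your general strategy (translate (i)--(ii) into a pairwise disjointness condition and run a semi-random matching argument) is in the right spirit, and your reduction to conditions (A), (B), (C) together with the verification that $D(G_\phi)\cap D(G_\psi)=\emptyset$ is equivalent to their conjunction is correct.

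The gap is that the auxiliary hypergraph $\mathcal{A}$ you construct does not satisfy the hypotheses of the Pippenger--Spencer/Frankl--R\"odl theorem you invoke, for two separate reasons. First, the first clause defining $F(G_\phi)$ ranges over all $c\in[n]\setminus S_\phi$, so $|D(G_\phi)|\ge e(n-v)=\Theta(n)$: the hyperedges of $\mathcal{A}$ have unbounded size, while the standard near-perfect matching theorems require fixed uniformity. (This happens for every $G$, not only for small $|V(G)|$ as your last sentence suggests; it is forced by the fact that $\phi$ cannot anticipate which $c$ the offending copy $\psi$ will supply.) Second, your claim that ``every pairwise codegree is $o(D)$'' is false: for a pair $\{a,b\}$ and a flag $(\{a,b\},c)$ over the same underlying pair, every copy with $\{a,b\}\in E(G_\phi)$ and $c\notin S_\phi$ contains both of these vertices of $\mathcal{A}$, so their codegree is $(1-O(1/n))D$ rather than $o(D)$. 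Hence the ``standard near-perfect matching theorem'' cannot be applied to $\mathcal{A}$ as built. The route you mention only in passing --- the nibble on the plain $e$-uniform hypergraph with vertex set $\binom{[n]}{2}$ and hyperedges $E(G_\phi)$, which genuinely is $\Theta(n^{v-2})$-regular with codegrees $O(n^{v-3})$, combined with a conflict system in which each copy conflicts with the $O(n^{v-2})$ copies violating (i) or (ii) with it --- is the one that works and is essentially how the cited sources (and the modern conflict-free matching theorems) proceed; but it is not ``equivalent'' to the flag construction and would need to be carried out, including the verification of the conflict-sparseness conditions, rather than offered as a reformulation.
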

A $G$-packing satisfying (ii) is called an {\it induced} $G$-packing (see, e.g. \cite{Frankl1987}).
Note that a weaker version of the above lemma, which only considered the existence of a large $G$-packing, regardless of the additional properties (i) and (ii), was used in \cite{glock2018triple}
(see Theorem 5 of \cite{glock2018triple}) to prove the lower bound of \eqref{glock}.
It is easy to see that Lemma \ref{packinglemma} is near-optimal in the sense that the maximum size of any $G$-packing in $K_n$ cannot exceed $\binom{n}{2}/e$.

We call the graph $G$ in Lemma \ref{packinglemma} the {\it component graph}, as it forms the basic component in the approximate decomposition.
Following Theorem \ref{maintheorem} it is natural to call a $\mr{G}_r(3r-4,3)$-free $r$-graph $\ma{H}\s\binom{[n]}{r}$ {\it optimal}
if it has roughly $\big(\frac{1}{r^2-r-1}+o(1)\big)n^2$ edges as $n\rightarrow\infty$.

The following construction summarizes the main steps taken to prove the lower bound in Theorem \ref{maintheorem}.
\begin{construction}\label{outlineofConstruction1}
Given  $\ma{H}$, an optimal $\mr{G}_r(3r-4,3)$-free $r$-graph, we construct an optimal $\mr{G}_{r+1}(3(r+1)-4,3)$-free $(r+1)$-graph by performing the  following three steps.
\begin{itemize}
  \item [\rm{(1)}] By applying Lemma \ref{packinglemma} with a carefully designed {\it component graph} $G_t$ (see Subsection \ref{step1}),
  we approximately decompose  the complete graph $K_n$ to $l=(1-\epsilon)n^2/2|G_t|$ edge disjoint copies of $G_t$, say, $G_t^1,G_t^2,\ldots,G_t^l$;
  \item [\rm{(2)}] For $1\le i\le l$,  by embedding in $V(G_t^i)$ many copies of $\ma{H}$  in a suitable way (see Subsection \ref{step2}) we get  an $(r+1)$-graph $G_t^i(\ma{H})$ (see Lemma \ref{F(G_t)});
  \item [\rm{(3)}] Output the $(r+1)$-graph $\ma{F}:=\cup_{i=1}^l G_t^i(\ma{H})$, the edge disjoint union of the $G_t^i(\ma{H})$'s  (see Subsection \ref{step3}).
\end{itemize}
\end{construction}
The base case, i.e., the optimal $\mr{G}_r(3r-4,3)$-free $r$-graph for $r=3$ was given by Glock \cite{glock2018triple}.
Then, by applying Construction \ref{outlineofConstruction1} iteratively, one can construct optimal $\mr{G}_r(3r-4,3)$-free $r$-graphs for all $r\geq 3$.
The reader is referred to Section \ref{fistlower} for more details.

\subsection{New lower and upper bounds for $\pi_d(r,k,3)$}

\noindent In the beginning of the last subsection it was mentioned that an $(n,r,2)$-packing is also a $\mr{G}_r(3r-4,3)$-free $r$-graph.
However, this is not true in general, namely for $r>k\ge 3$, an $(n,r,k)$-packing is not necessarily a $\mr{G}_r(3r-2k,3)$-free $r$-graph,
as $3(k-1)<2k$ if and only if $k<3$.

Our next result provides new lower and upper bounds for $\pi(r,k,3)$ for any fixed $r>k\ge 2$.
\begin{theorem}\label{secondtheorem}
For any fixed integers $r>k\ge 2$,
 $$\frac{1}{r^k-r}\le\liminf_{n\rightarrow\infty}\frac{f_r(n,3r-2k,3)}{n^k}\le\limsup_{n\rightarrow\infty}\frac{f_r(n,3r-2k,3)}{n^k}\le \frac{1}{k!\binom{r}{k}-\frac{k!}{2}}.$$
\end{theorem}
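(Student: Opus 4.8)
The plan is to prove the two inequalities by quite different means: the upper bound by a refinement of the usual double-counting, and the lower bound by an explicit algebraic construction.

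\emph{Upper bound.} Let $\ma{H}\s\binom{[n]}{r}$ be $\mr{G}_r(3r-2k,3)$-free and set $m:=|\ma{H}|$. Since three edges through a common $k$-set span at most $3(r-k)+k=3r-2k$ vertices, no $k$-subset of $[n]$ lies in three distinct edges; let $N_0,N_1,N_2$ count the $k$-subsets of $[n]$ lying in exactly $0,1,2$ edges. Double counting incidences between edges and their $k$-subsets gives $m\binom{r}{k}=N_1+2N_2$, and with $N_0+N_1+N_2=\binom{n}{k}$ this yields the identity $m=\big(\binom{n}{k}+N_2-N_0\big)/\binom{r}{k}$. A short manipulation shows that to obtain the stated bound it suffices to prove the key estimate
\begin{equation*}
  N_2-N_0\;\le\;\frac{\binom{n}{k}}{2\binom{r}{k}-1}+o(n^k).
\end{equation*}
To prove this I would analyse the local structure around a pair of edges meeting in \emph{exactly} $k$ vertices: if $A,B\in\ma{H}$ with $A\cap B=S$ and $|S|=k$, then any third edge $C$ satisfies $|C\cap(A\cup B)|\le k-1$ (else $A\cup B\cup C$ has at most $3r-2k$ vertices), so among the $\binom{2r-k}{k}$ $k$-subsets of $A\cup B$ exactly $2\binom{r}{k}-1$ are covered — those contained in $A$ or in $B$ — of which $S$ is the unique doubly covered one, while the remaining $\binom{2r-k}{k}-2\binom{r}{k}+1$ are uncovered. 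The appearance of $2\binom{r}{k}-1$ here is precisely the denominator above, which suggests charging doubly covered $k$-sets against the uncovered $k$-sets inside the corresponding unions $A\cup B$, by a suitably weighted or fractional count; the technical heart is to bound the multiplicity with which a fixed uncovered $k$-set is charged. Edge-pairs meeting in more than $k$ vertices must be treated separately: using that no $(k+1)$-set lies in three edges one bounds the number of such pairs and checks they do not affect the estimate.

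\emph{Lower bound.} Here one must exhibit, for infinitely many $n$ (and then all $n$, by monotonicity of $f_r(\cdot,3r-2k,3)$), a $\mr{G}_r(3r-2k,3)$-free $r$-graph with $(1+o(1))\,n^k/(r^k-r)$ edges. The construction is algebraic. Fix a large prime power $q$, give the vertex set a partite-type structure over $\mathbb{F}_q$, and take each edge to be the graph of a suitably chosen low-degree polynomial-type map; equivalently, the edges are read off from the rows of a matrix $M$ over $\mathbb{F}_q$. Translating the span condition, $\ma{H}$ is $\mr{G}_r(3r-2k,3)$-free precisely when every three edges $A,B,C$ satisfy $|A\cap B|+|A\cap C|+|B\cap C|-|A\cap B\cap C|\le 2k-1$, and since for any three such maps the three pairwise ``agreement sets'' $T_1,T_2,T_3$ always satisfy $T_1\cap T_2=T_1\cap T_3=T_2\cap T_3=:T$, this is equivalent to $|T_1\cup T_2\cup T_3|+|T|\le 2k-1$. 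One then introduces a matrix property — \emph{strongly $3$-perfect hashing} — tailored so that the rows of any matrix having it, read as such maps, automatically obey this inequality, and the construction reduces to producing such a matrix with as many rows as possible, namely enough to make $|\ma{H}|=(1+o(1))\,n^k/(r^k-r)$.

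\emph{Where the difficulty lies.} The upper bound is the easier half — and its $r=3,\,k=2$ case is already contained in \cite{glock2018triple} — so the real work is the lower bound. The obstacle is twofold: first, to isolate the correct matrix property, strong enough to force the three-edge span condition yet loose enough to admit matrices of essentially optimal size; and second, to actually build such matrices with the right number of rows, either explicitly via the polynomial method or probabilistically while coping with the substantial dependencies among the constraints arising from different triples of rows. Checking that the resulting edge count comes out (in the limit) as exactly $n^k/(r^k-r)$ is what pins down the constant in the lower bound.
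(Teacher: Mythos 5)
Your overall architecture (codegree counting for the upper bound, a matrix/algebraic construction for the lower bound) matches the paper's, but both halves have concrete gaps at exactly the points you flag as ``the technical heart.''

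\emph{Upper bound.} Your reduction to $N_2-N_0\le \binom{n}{k}/(2\binom{r}{k}-1)+o(n^k)$ is correct arithmetic, and your local analysis of a pair $A,B$ with $|A\cap B|=k$ is right: every third edge meets $A\cup B$ in at most $k-1$ vertices, so inside $A\cup B$ there is one doubly covered $k$-set, $2\binom{r}{k}-2$ singly covered ones, and the rest uncovered. But your proposed charging of the doubly covered $k$-set against the \emph{uncovered} $k$-sets of $A\cup B$ founders on the multiplicity you defer: a fixed uncovered $k$-set $T$ can lie in $A\cup B$ for as many as $\Theta(n^{k-1})$ or more pairs (each vertex of $T$ may lie in many edges), and nothing in the freeness hypothesis bounds this by a constant. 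The paper charges instead against the \emph{singly} covered $k$-sets $\Phi_K=\big(\binom{A}{k}\cup\binom{B}{k}\big)\setminus\{K\}$, where disjointness of the $\Phi_K$'s over distinct $K\in\ma{K}_2$ is immediate from a three-edge span argument (a common element of $\Phi_K\cap\Phi_{K'}$ has codegree one, forcing a common edge $A\supseteq K\cup K'$ and a violating triple). Moreover, the case you set aside --- pairs with $|A\cap B|>k$ --- is not negligible: each edge can participate in $O(1)$ such pairs, so they can contribute $\Theta(n^k)$ to $N_2$, i.e.\ a constant fraction of the main term. The paper eliminates this case up front by deleting $O(n^{k-1})$ edges so that no $(k-1)$-set has codegree one (Lemma \ref{naivelemma2}); after that cleaning, any pair supporting a codegree-$2$ $k$-set must satisfy $|A\cap B|=k$ exactly. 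Without these two devices your estimate on $N_2-N_0$ is not proved.

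\emph{Lower bound.} Your translation of $\mr{G}_r(3r-2k,3)$-freeness for functional/partite edges into the condition $|T_1\cup T_2\cup T_3|+|T|\le 2k-1$ on the agreement sets is exactly the paper's strongly $3$-perfect hashing property, and the polynomial-evaluation construction you gesture at is the right one (the paper realizes it via $\mathbb{F}_q^{<k}[x,\vec{v}]$ for a generic evaluation vector $\vec v$, using a lemma of F\"uredi to count the bad $\vec v$). The gap is quantitative: a single $r$-partite matrix on $n=rq$ vertices has at most $q^k$ columns in this construction (and at most $2q^k$ for any free partite $r$-graph), giving only $n^k/r^k$ edges --- strictly short of the claimed $n^k/(r^k-r)$. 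The missing ingredient is the recursion: after placing the $q^k$ partite edges across the parts $V_1,\dots,V_r$, one must fill each part $V_i$ (of size $q$) with a smaller instance of the construction, yielding $f(rq)\ge q^k+r\,f(q)$ and hence the constant $1/(r^k-r)$; one must also verify that the union of $\ma{H}_{\ma{M}}$ with the $r$ recursive copies remains $\mr{G}_r(3r-2k,3)$-free. As written, your plan cannot reach the stated lower bound even if the matrix construction is carried out optimally.
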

One can easily check that for $r$ much larger than $k$ the gap between the lower and upper bounds in Theorem \ref{secondtheorem} is quite small.
For example, let $r=\frac{k!}{2}$ and $k$ be sufficiently large, then the two bounds almost match, as $r^k\thickapprox k!\binom{r}{k}$.
On the contrary, if $r$ is approximately $k$, the lower bound becomes even weaker than that of \eqref{inequality}. We omit the detailed computation.

The upper bound in Theorem \ref{secondtheorem}, which includes that of Theorem \ref{maintheorem} as a special case,
follows from a weighted counting argument, and is presented in Section \ref{upper}.
The lower bound is proved by an algebraic construction, which relies on a new matrix property called  {\it strongly $3$-perfect hashing}, which is introduced below in Definition \ref{stronghashing}.
The following lemma shows that in order to construct a $\mr{G}_r(3r-2k,3)$-free $r$-graph it is sufficient to construct a matrix with this property.

\begin{lemma}\label{matr}
Let $r>k\ge 2, \text{ and } q$ be integers. If $\ma{M}$ is a strongly $3$-perfect hashing $q$-ary matrix of order $r\times q^k$,
then it induces a $\mr{G}_r(3r-2k,3)$-free $r$-partite $r$-graph $\ma{H}_{\ma{M}}$ over $n=rq$ vertices and $q^k$ edges,
where the vertices can be partitioned to $r$ disjoint parts $V_1,\ldots,V_r$ of size $q$ each.
\end{lemma}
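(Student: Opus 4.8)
The plan is to build $\ma{H}_{\ma{M}}$ from $\ma{M}$ in the most natural way --- let the $r$ rows index the $r$ vertex classes and the $q^k$ columns index the edges --- and then observe that, once the definitions are unwound, the $\mr{G}_r(3r-2k,3)$-free condition is exactly a restatement of the defining inequality of a strongly $3$-perfect hashing matrix. First I would fix notation: write the columns of $\ma{M}$ as $M^{(1)},\dots,M^{(q^k)}\in[q]^r$, with $M^{(j)}=(M^{(j)}_1,\dots,M^{(j)}_r)$, set $V_i:=\{i\}\times[q]$ for $i\in[r]$, and let $V:=V_1\cup\cdots\cup V_r$, so that $|V|=rq=n$ and $V$ is partitioned into $r$ classes of size $q$. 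To the $j$-th column associate the transversal edge
$$e_j:=\bigl\{(i,M^{(j)}_i):i\in[r]\bigr\},$$
which meets each $V_i$ in exactly one vertex, and set $\ma{H}_{\ma{M}}:=\{e_1,\dots,e_{q^k}\}$. By construction $\ma{H}_{\ma{M}}$ is an $r$-partite $r$-graph with vertex classes $V_1,\dots,V_r$; that it has exactly $q^k$ edges is the assertion that the columns of $\ma{M}$ are pairwise distinct, which is built into Definition \ref{stronghashing} (or follows from it).

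It then remains to check that $\ma{H}_{\ma{M}}$ is $\mr{G}_r(3r-2k,3)$-free, i.e.\ that any three distinct edges span at least $3r-2k+1$ vertices. Fix three distinct columns $a,b,c$. Because $e_a,e_b,e_c$ are transversals of the partition, two of them can overlap only inside a common class, so the union decomposes classwise:
$$|e_a\cup e_b\cup e_c|=\sum_{i=1}^r\bigl|\{M^{(a)}_i,M^{(b)}_i,M^{(c)}_i\}\bigr|=3r-\sum_{i=1}^r\Bigl(3-\bigl|\{M^{(a)}_i,M^{(b)}_i,M^{(c)}_i\}\bigr|\Bigr).$$
Each summand $3-|\{M^{(a)}_i,M^{(b)}_i,M^{(c)}_i\}|$ lies in $\{0,1,2\}$ and records the amount of ``collision'' among the three columns in row $i$. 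By Definition \ref{stronghashing}, the strongly $3$-perfect hashing property bounds this total collision count by $2k-1$ for every triple of distinct columns (in the natural reformulation, $|S_{ab}|+|S_{ac}|+|S_{bc}|-|S_{abc}|\le 2k-1$, where $S_{xy}$ is the set of rows on which columns $x,y$ agree and $S_{abc}=S_{ab}\cap S_{ac}\cap S_{bc}$). Substituting yields $|e_a\cup e_b\cup e_c|\ge 3r-(2k-1)=3r-2k+1$, as needed.

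The only point demanding care is the tightness of this bookkeeping: since the three edges are transversals there is no interaction between distinct rows, so the classwise identity above is an \emph{equality}, and the hypergraph deficiency $3r-|e_a\cup e_b\cup e_c|$ is matched verbatim with the matrix quantity governed by Definition \ref{stronghashing} --- this exact correspondence is what makes the lemma a pure translation rather than an inequality to be fought for. It is also worth recording why the word ``strongly'' is essential: a matrix that is merely $3$-perfect hashing in the classical sense (every three columns separated in some row) only guarantees a collision count of at most $2(r-1)$, which already exceeds $2k-1$ under the hypothesis $r>k$ and hence is far too weak; the strong variant is tailored precisely to impose the sharp bound $2k-1$. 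Once distinctness of the columns is invoked, the edge count is $q^k$, and the proof is complete.
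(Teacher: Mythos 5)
Your proof is correct and follows essentially the same route as the paper: translate the matrix to the transversal $r$-partite $r$-graph, then bound $|e_a\cup e_b\cup e_c|$ row by row using the separation guarantee. Your reformulation of Definition \ref{stronghashing} as ``collision count $|S_{ab}|+|S_{ac}|+|S_{bc}|-|S_{abc}|\le 2k-1$'' is an exact algebraic equivalent (since sep $+$ collision $=r+|S_{abc}|$), so substituting into $|e_a\cup e_b\cup e_c|=3r-\text{collision}$ gives the same $3r-2k+1$ bound that the paper obtains by directly weighting separating, two-agree, and three-agree rows by $3,2,1$.
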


\noindent The proof of Lemma \ref{matr} is given in Subsection \ref{defofperfecthash}.
Indeed, the multipartite $r$-graph constructed using Lemma \ref{matr} is {\it optimal} up to a constant,
in the sense that it is easy to verify by the pigeonhole principle that any $\mr{G}_r(3r-2k,3)$-free $r$-partite $r$-graph, which has equal part size $q$, can have at most $2q^k$ edges.

The next construction outlines the main ingredients in proving the lower bound  of Theorem \ref{secondtheorem}.
\begin{construction}[Construction proving the lower bound of Theorem \ref{secondtheorem}]\label{outlineofConstruction2}
By induction we assume that $f_r(n,3r-2k,3)\ge\frac{n^k}{r^k-r}-an^{k-1}$ holds for every integer less than $n$,
where $a=a(r,k)$ is some constant not depending on $n$, and we  prove the statement for $n$.
\begin{itemize}
  \item [\rm{(1)}] For fixed $r,k$, let $q$ be the largest prime power satisfying $rq\le n$.
  By using the algebraic construction given in Subsections \ref{furedi} and \ref{matrixconstrucion} we obtain an $r\times q^k$ $q$-ary strongly $3$-perfect hashing matrix $\ma{M}$,
  which by Lemma \ref{matr} induces an $r$-partite $r$-graph $\ma{H}_{\ma{M}}$ over $r$ vertex parts $V_1,\ldots,V_r$;
  \item [\rm{(2)}] By the induction hypothesis construct on each vertex part $V_i$ a $\mr{G}_r(3r-2k,3)$-free $r$-graph $\ma{H}_i$ with at least $\frac{q^k}{r^k-r}-aq^{k-1}$ edges;
  \item [\rm{(3)}] Output the $r$-graph $\ma{F}:=(\cup_{i=1}^r\ma{H}_i)\cup\ma{H}_{\ma{M}}$, whose edges are the disjoint union of the edges of $\ma{H}_i, 1\le i\le r$ and $\ma{H}_{\ma{M}}$.
\end{itemize}
\end{construction}

The $r$-graph  $\ma{F}$ has $rq$ vertices and at least
$$q^k+r\cdot(\frac{q^k}{r^k-r}-aq^{k-1})=\frac{(rq)^k}{r^k-r}-arq^{k-1}$$ edges.
In order to complete the induction step it remains to show that $\ma{F}$ is  $\mr{G}_r(3r-2k,3)$-free, and that the number of its edges is at least $\frac{n^k}{r^k-r}-an^{k-1}$.
The detailed proof is given in Section \ref{secondlower}.

\subsection{Outline of the paper}

\noindent The rest of the paper is organized as follows.
In Section \ref{relatedwork} we briefly introduce two combinatorial problems which are closely related to the study of $\pi_d(r,k,e)$.
In Section \ref{upper} we present the proof of the upper bound stated in Theorem \ref{secondtheorem}.
In Sections \ref{fistlower} and \ref{secondlower} we present the proofs of the lower bounds stated in Theorems \ref{maintheorem} and \ref{secondtheorem}, respectively.

\section{Related work}\label{relatedwork}

\subsection{The order of $f_r(n,er-(e-1)k+1,e)$}

\noindent In Question \ref{openquestion} we asked whether $f_r(n,er-(e-1)k,e)/n^k$ converges as $n$ tends to infinity.
In a  similar setting, Brown, Erd\H{o}s and S\'os \cite{BES71} and Alon and Shapira \cite{AlonShapira} posed the following conjecture.
\begin{conjecture}[see, e.g. \cite{BES71,AlonShapira}]\label{related}
  For fixed integers $r>k\ge 2, e\ge 3$, it holds that
  $$n^{k-o(1)}<f_r(n,er-(e-1)k+1,e)=o(n^k)$$ as $n\rightarrow\infty$.
\end{conjecture}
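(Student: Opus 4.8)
\medskip

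\noindent Conjecture~\ref{related} is open, so the following is a plan of attack rather than a complete proof. The statement has two halves of quite different flavour: an upper bound $f_r(n,er-(e-1)k+1,e)=o(n^k)$ and a lower bound $f_r(n,er-(e-1)k+1,e)>n^{k-o(1)}$. When $r=3,k=2,e=3$ these two together are precisely the Ruzsa--Szemer\'edi $(6,3)$-theorem \cite{Ruzsa-Szemeredi}, and for general $r$ with $k=2,e=3$ they form the Erd\H{o}s--Frankl--R\"odl theorem; so the natural approach is to build a common generalization of these, deducing the upper bound from the hypergraph removal lemma and the lower bound from a Behrend-type construction.

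\emph{Upper bound.} Let $\ma{H}\s\binom{[n]}{r}$ be $\mr{G}_r(er-(e-1)k+1,e)$-free with $|\ma{H}|\ge cn^k$. First one records the double-counting fact already used for \eqref{inequality}: every $k$-subset of $[n]$ lies in at most $e-1$ edges of $\ma{H}$, so $\ma{H}$ is a bounded-degree, near-packing $r$-graph. The core of the argument is to encode $\ma{H}$ into an auxiliary $O(1)$-uniform hypergraph $\ma{A}$, built by blowing up $[n]$ into a constant number of vertex classes, so that each edge $E\in\ma{H}$ becomes a bounded number of copies of a fixed small configuration $K$ (for $r=3,k=2,e=3$ this is a triangle, exactly as in the $(6,3)$ proof), arranged so that the $\mr{G}_r$-free hypothesis forces these copies to be induced and essentially pairwise edge-disjoint. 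Then $\ma{A}$ contains $\Omega(n^k)$ copies of $K$ yet no $o(n^k)$ edges of $\ma{A}$ meet all of them, contradicting the hypergraph removal lemma of Gowers and of Nagle--R\"odl--Schacht--Skokan. The main obstacle is the encoding itself: already for $k\ge 3$ (with $e=3$) several distinct ``overlap types'' of three edges on $3r-2k$ vertices must be handled simultaneously, and for $e\ge 4$ one must capture all the ways $e$ edges can span few vertices, so $K$ and the removal statement become substantially more intricate --- it is not clear the reduction goes through with a single application of the removal lemma, and a removal lemma for systems of linear equations, or an iterated argument, may be needed.

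\emph{Lower bound.} Here one generalizes the Ruzsa--Szemer\'edi/Behrend construction. Fix a large prime $N$ and a subset $B\s[N]$ with $|B|\ge N^{1-o(1)}$ that avoids every nontrivial solution to the finite family of homogeneous linear systems describing how $e$ edges of an $r$-partite $r$-graph can span only $er-(e-1)k$ vertices; such $B$ exists by Behrend's construction, its tensor powers, and Ruzsa's generalizations for non-degenerate translation-invariant systems. One then constructs an $r$-partite $r$-graph on $n=\Theta(N)$ vertices whose edges are indexed by $[N]^{k-1}\times B$ --- hence $N^{k-1}|B|=n^{k-o(1)}$ of them --- each edge having coordinates given by carefully chosen linear forms in the $k$ free parameters, the forms being selected so that any $e$ edges spanning at most $er-(e-1)k+1$ vertices would force a forbidden solution inside $B$, which yields the $\mr{G}_r(er-(e-1)k+1,e)$-free property. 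The main obstacle is enumerating the extremal configurations of $e$ edges on $er-(e-1)k$ vertices (there are several combinatorial types once $k\ge 3$ or $e\ge 4$) and choosing the linear forms so that every corresponding system is simultaneously non-degenerate, since for degenerate systems the Behrend-type lower bound on $|B|$ fails.

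In short, for $e=3$ and any $r>k\ge 2$ both halves appear reachable with current tools --- hypergraph removal on one side, a Behrend-type construction on the other, the $k=2$ case being classical --- whereas the genuinely hard regime is $e\ge 4$, where both the removal-lemma reduction and the construction must be redesigned to cope with many-edge configurations.
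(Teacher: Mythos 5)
This statement is an open conjecture; the paper offers no proof of it and only surveys partial results in Section~\ref{relatedwork}. You correctly recognize this, and your plan of attack matches the known state of the art: the upper bound via hypergraph removal is established for all $r\ge k+1\ge e\ge 3$ (so in particular for all $r>k\ge 2$ when $e=3$), and the lower bound via Behrend-type constructions for all $r>k\ge 2$ with $e=3$ and for $r>k=2$ with $e\in\{4,5,7,8\}$, as cited in \cite{AlonShapira,EPR,Ruzsa-Szemeredi,ge2017sparse}. One small correction: for $e=3$ both halves are not merely ``reachable'' --- they are already theorems in that literature --- so the genuinely open regime is exactly the one you flag, $e\ge 4$ (most prominently the $(7,4)$-problem, i.e.\ whether $f_3(n,7,4)=o(n^2)$), where neither the removal-lemma reduction nor the construction is currently known to go through.
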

\noindent Note that by \eqref{BESbound}, $$\Omega(n^{k-\fr{1}{e}})<f_r(n,er-(e-1)k+1,e)=O(n^k).$$

Conjecture \ref{related} plays an important role in extremal graph theory.
The first case of the conjecture, namely the determination of the order of $f_3(n,6,3)$, was only resolved by Ruzsa and Szemer\'edi \cite{Ruzsa-Szemeredi} in the  (6,3)-theorem, which was an early application of the celebrated Regularity Lemma \cite{Szemeredi}, while establishing a surprising connection with additive number theory \cite{Behrend46}.
Following efforts of many researchers, the upper bound part of the conjecture is now known to hold for all $r\ge k+1\ge e\ge 3$, \cite{AlonShapira,EPR,ge2017sparse,Rodl,Ruzsa-Szemeredi},
whereas the lower bound is known to hold for all $r>k\ge 2,e=3$ \cite{AlonShapira,EPR,Ruzsa-Szemeredi} and $r>k=2,e\in\{4,5,7,8\}$ \cite{ge2017sparse}.
The  reader is referred to \cite{shangguan2019} for the best known  general lower bound, which shows that for all fixed $r>k\ge 2,e\ge 3$,
$$f_r(n,er-(e-1)k+1,e)=\Omega(n^{k-\fr{1}{e}}(\log n)^{\fr{1}{e-1}}).$$
The smallest case of Conjecture \ref{related} which is still  unresolved is the determination whether $f_3(n,7,4)=o(n^2)$, which is  known as the (7,4)-problem.

\subsection{Locally sparse hypergraphs}

\noindent Recall that  an $(n,r,t)$-packing is of size at most $\binom{n}{t}/\binom{r}{t}$, and such a packing is called an {\it $(n,r,t)$-design} or an {\it $(n,r,t)$-Steiner system}
if its size attains this upper bound with equality.  In \cite{ErdosTuranExponents1} an $(n,3,2)$-design is called $e$-sparse if it is simultaneously $\mr{G}_3(i+2,i)$-free for every $2\le i\le e$.
Erd\H{o}s \cite{ErdosTuranExponents1} posed the following conjecture on the existence of $e$-sparse Steiner triple systems.
\begin{conjecture}[\cite{ErdosTuranExponents1}]\label{erd}
  For a fixed integer $e\ge 2$, there exists $n_0=n_0(e)$ such that one can construct $n$-vertex $e$-sparse Steiner triple systems for every $n\ge n_0$ with $n\equiv 1,3 {\pmod 6}$.
\end{conjecture}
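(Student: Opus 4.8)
\medskip
\noindent\emph{A proof strategy.}
The plan is to prove Conjecture \ref{erd} by the absorption method, importing the machinery developed for the existence of combinatorial designs and adapting it to the sparse (equivalently, high-girth) setting. An $n$-vertex Steiner triple system is exactly a perfect matching in the auxiliary $3$-uniform hypergraph $\ma{P}$ with vertex set $\binom{[n]}{2}$ whose edges are the ``triangles'' $\{\{x,y\},\{y,z\},\{x,z\}\}$ ranging over all $\{x,y,z\}\in\binom{[n]}{3}$; being $e$-sparse amounts to forbidding, within the chosen matching, every sub-configuration isomorphic to a member of $\mr{G}_3(i+2,i)$ for $2\le i\le e$. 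So the goal becomes: find a perfect matching in $\ma{P}$ that avoids a fixed finite list of bounded-size ``conflicts''.

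First I would establish an almost-optimal version: a high-girth analogue of the triangle-removal / R\"odl nibble process produces, with high probability, an $e$-sparse partial Steiner triple system covering all but $o(n^2)$ of the pairs. The property that makes this run is precisely the Brown--Erd\H{o}s--S\'os estimate $f_3(n,i+2,i)=O(n^2)$: since, for each $i\le e$, the number of forbidden configurations through a fixed pair is of a strictly smaller order than the number of available triples through that pair, the greedy process has enough freedom to dodge every forbidden configuration while remaining quasirandom, which can be certified by a differential-equation / Azuma-type concentration analysis.

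The core of the argument is then absorption. Before running the nibble one sets aside a random sparse ``absorbing structure'' --- a family of constant-size gadgets together with a prescribed installation rule --- designed so that (i) any $o(n^2)$-size leftover of uncovered pairs produced by the nibble can be completed to a full Steiner triple system using the reserved gadgets, and (ii) installing any admissible combination of gadgets never creates a configuration isomorphic to a member of $\mr{G}_3(i+2,i)$, $2\le i\le e$. I would implement this by iterative (``cascading'') absorption: fix a vortex $[n]\supseteq V_1\supseteq\cdots\supseteq V_m$ of geometrically shrinking vertex sets, cover all pairs not contained in $V_m$ over $m$ rounds of nibble-plus-partial-absorption, and finish off the bounded remainder inside $V_m$ by brute force, using absorbers that are themselves high-girth partial Steiner systems so that their local contribution cannot complete a short configuration.

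I expect requirement (ii) to be the main obstacle: one must design absorbers that are simultaneously flexible enough to swallow an arbitrary small leftover and ``sparse'' enough that their insertion is conflict-free, which forces the absorbers to meet the bulk of the system only in a tightly controlled, low-degree fashion. Verifying that no forbidden configuration ever appears requires, for every $i\le e$, enumerating the ways $i$ triples drawn from the union of nibble edges, absorber edges, and edges used to install absorbers could span only $i+2$ vertices, and bounding each such possibility away. A secondary technical point is coupling the randomness of the reserved absorbing structure with that of the nibble so that the leftover is genuinely absorbable; as usual this is arranged by a two-stage reservation together with a probabilistic absorber-existence lemma, in the spirit of the design-existence results of \cite{keevash2014existence,glock2016existence}.
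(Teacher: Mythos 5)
There is nothing in the paper to compare your proposal against: the statement you were given is Conjecture~\ref{erd}, a conjecture of Erd\H{o}s that the paper explicitly leaves open. The authors only record partial progress due to Bohman--Warnke \cite{bohman2019large} and Glock--K\"uhn--Lo--Osthus \cite{glock2018conjecture}, namely the existence of $e$-sparse \emph{partial} systems of size $(1-o(1))n^2/6$; the exact statement (a genuine Steiner triple system, covering every pair) is not proved in the paper or in the works it cites.

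Your write-up is a roadmap rather than a proof, and the gap sits exactly where you say it does. First, even the approximate step is not free: making a nibble or random greedy process avoid all configurations in $\mr{G}_3(i+2,i)$ for $2\le i\le e$ while staying quasirandom is precisely the content of the two cited papers, and your one-line justification (``fewer forbidden configurations through a pair than available triples'') does not by itself yield the required concentration -- one must track the evolving conflict hypergraph, not just first moments. Second, and more seriously, the absorption step is asserted, not constructed: you need absorbers that (a) can complete an \emph{arbitrary} $o(n^2)$-size leftover exactly, (b) remain conflict-free under every admissible combination of installations, jointly with the nibble output and with each other, and (c) survive the exact completion at the bottom of the vortex, where ``brute force'' must still respect the girth condition and the divisibility constraints $n\equiv 1,3\pmod 6$. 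You correctly identify (b) as the main obstacle but do not overcome it; no candidate gadget is exhibited, and no argument is given that conflict-freeness is preserved under the union of many gadgets. As it stands the proposal establishes nothing beyond what \cite{bohman2019large,glock2018conjecture} already prove, and the conjecture should be treated as open in the context of this paper.
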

Recent results attained towards resolving  this conjecture were proved independently by Bohman and Warnke \cite{bohman2019large}, and Glock, K\"uhn, Lo and Osthus \cite{glock2018conjecture},
who showed that for fixed $e$, there exist $e$-sparse $(n,3,2)$-packings with size $(1-o(1))n^2/6$, which is near-optimal.
A generalization of Conjecture \ref{erd} was made by F\"{u}redi and Ruszink\'{o} \cite{sparse} (see also Conjecture 7.2 of \cite{glock2018conjecture} for another generalization),
who conjectured the existence of $e$-sparse $(n,r,2)$-Steiner systems, where an $(n,r,2)$-Steiner system is called $e$-sparse if it is simultaneously $\mr{G}_r(ir-2i+2,i)$-free for every $2\le i\le e$.

Generalizing Question \ref{openquestion} in the spirit of the conjectures of Erd\H{o}s, and F\"{u}redi and Ruszink\'{o} leads to the following question.
For fixed integers $r>k\ge 2, e\ge 3$, an $r$-graph is called {\it locally $(e,k)$-sparse} if it is $\mr{G}_r(ir-(i-1)k,i)$-free for every $2\le i\le e$.
\begin{question}
  Let $r>k\ge 2, e\ge 3$ be fixed integers, and $n$ be a sufficiently large integer.
  Then do there exist locally $(e,k)$-sparse $(n,r,k)$-packings with size at least $(1-o(1))\binom{n}{k}/\binom{r}{k}$, where $o(1)\rightarrow 0$ as $n\rightarrow\infty$?
\end{question}

\section{Proof of Theorem \ref{secondtheorem}, the upper bound}\label{upper}

\noindent To prove the upper bound in Theorem \ref{secondtheorem}, we need the following technical lemma.
Let $\ma{H}\s\binom{[n]}{r}$ be an $r$-graph and $T\s[n]$ be a subset.
The {\it codegree} of $T$ in $\ma{H}$, $\deg_{\ma{H}}(T)$, is the number of edges in $\ma{H}$ which contain $T$ as a subset, i.e., $\deg_{\ma{H}}(T)=|\{A\in\ma{H}:T\s A\}|.$
\begin{lemma}\label{naivelemma2}
  An $r$-graph $\ma{H}$ can be made to have no $(k-1)$-subset of codegree one by deleting at most $\binom{n}{k-1}$ of its edges.
\end{lemma}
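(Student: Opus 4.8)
The plan is to run a greedy deletion process and bound the number of deletions by an elementary injectivity argument. Define a nested chain of $r$-graphs $\ma{H}=\ma{H}_0\supseteq\ma{H}_1\supseteq\cdots$ as follows. Given $\ma{H}_i$, if no $(k-1)$-subset of $[n]$ has codegree exactly one in $\ma{H}_i$, halt and return $\ma{H}_i$; otherwise pick any $(k-1)$-subset $T_i\s[n]$ with $\deg_{\ma{H}_i}(T_i)=1$, let $A_i$ be the unique edge of $\ma{H}_i$ containing $T_i$, and set $\ma{H}_{i+1}:=\ma{H}_i\setminus\{A_i\}$. Each step deletes exactly one edge, so the process halts after some finite number $m$ of steps, and $\ma{H}_m$ has, by construction, no $(k-1)$-subset of codegree one. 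Since $\ma{H}_m$ is obtained from $\ma{H}$ by deleting the $m$ edges $A_0,\ldots,A_{m-1}$, it suffices to prove $m\le\binom{n}{k-1}$.

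The key point is that the chosen sets $T_0,\ldots,T_{m-1}$ are pairwise distinct. Indeed, fix $i<j$: right after step $i$ we have $\deg_{\ma{H}_{i+1}}(T_i)=\deg_{\ma{H}_i}(T_i)-1=0$, and because the chain is nested, $\ma{H}_j\s\ma{H}_{i+1}$, so $\deg_{\ma{H}_j}(T_i)=0\ne 1$; as $\deg_{\ma{H}_j}(T_j)=1$ this forces $T_j\ne T_i$. Hence $\{T_0,\ldots,T_{m-1}\}$ consists of $m$ distinct $(k-1)$-subsets of $[n]$, so $m\le\binom{n}{k-1}$, which finishes the argument.

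I do not anticipate a genuine obstacle, since everything is elementary; the only thing worth spelling out is why a one-shot deletion is not obviously enough. If one simply removes every edge that, in the \emph{original} $\ma{H}$, witnesses a codegree-one $(k-1)$-subset, those removals can drop the codegree of some other $(k-1)$-subset from two or more down to exactly one, creating new violations. The iterative process avoids this precisely because once a $(k-1)$-subset attains codegree zero it stays at zero under further deletions, and this monotonicity is exactly what makes the map from deleted edges to resolved $(k-1)$-subsets injective.
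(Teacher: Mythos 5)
Your proof is correct and is essentially identical to the paper's: both run the same greedy deletion process and observe that each deleted edge is witnessed by a $(k-1)$-subset whose codegree drops to zero and stays there, so the witnesses are pairwise distinct and the process stops after at most $\binom{n}{k-1}$ steps. Your extra remark on why a one-shot deletion does not suffice is a nice clarification but does not change the argument.
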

\begin{proof}
  Successively remove the edges of $\ma{H}$ which contain at least one $(k-1)$-subset of codegree one.
  Let $A_i$ be the $i$-th removed edge of $\ma{H}$, and $T_i$ be some $(k-1)$-subset of codegree one contained in $A_i$.
  Since during this process the codegree of any $(k-1)$-subset can only decrease, then $T_i \neq T_j$ for $i\neq j$.
  In other words, the edges $A_i, A_j$ are removed due to distinct $(k-1)$-subsets of codegree one, and therefore the process terminates after at most $\binom{n}{k-1}$ edge removals.
  Note that the resulting $r$-graph is possibly empty.
\end{proof}

Next we present the proof of the upper bound in Theorem \ref{secondtheorem}.
\vspace{10pt}

\noindent{\it\textbf{Proof of Theorem \ref{secondtheorem}, the upper bound.}}
  Let $\ma{H}\s\binom{[n]}{r}$ be a $\mr{G}_r(3r-2k,3)$-free $r$-graph, and let $\ma{F}$ be the resulting $r$-graph from Lemma \ref{naivelemma2}
  by removing at most $\binom{n}{k-1}$ edges from $\ma{H}$, therefore $|\ma{H}|\leq |\ma{F}|+O(n^{k-1})$.
  The upper bound stated in Theorem \ref{secondtheorem} will follow by showing that
  $$|\ma{F}|\le\fr{2\binom{r}{k}}{2\binom{r}{k}-1}\frac{\binom{n}{k}}{\binom{r}{k}}.$$

  By the $\mr{G}_r(3r-2k,3)$-freeness of $\ma{F}$, it is clear that any $k$-subset of $[n]$ is contained in at most two edges of $\ma{F}$.
  For $i\in\{1,2\}$, let $\ma{K}_i\s\binom{[n]}{k}$ be the family of $k$-subsets of $[n]$ with codegree $i$ in $\ma{F}$, i.e.,
  $$\ma{K}_i=\{K\in\binom{[n]}{k}:\deg_{\ma{F}}(K)=i\}.$$
  Then, for any $A\in\ma{F}$ and $K\in\binom{A}{k}$, either $K\in\ma{K}_1$ or $K\in\ma{K}_2$, and by counting the number of $k$-subsets contained in the edges of $\ma{F}$ it follows that
  \begin{equation}\label{formula1}
    \begin{aligned}
      \binom{r}{k}|\ma{F}|=|\ma{K}_1|+2|\ma{K}_2|.
    \end{aligned}
  \end{equation}
  For  $K=\{x_1,\ldots,x_k\}\in\ma{K}_2$, let  $A,B\in\ma{F}$  be the two edges that contain it, hence  $|A\cap B|\ge k$.  We claim that  in fact $|A\cap B|=k$.
  Indeed, let $a\in A\backslash B$ and consider the $(k-1)$-subset $\{x_1,\ldots,x_{k-2},a\}\subseteq A$.
  As $\ma{F}$ contains no $(k-1)$-subset of codegree one, $\deg_{\ma{F}}(\{x_1,\ldots,x_{k-2},a\})\ge 2$,
  which implies that there exists at least one edge $C\in\ma{F}\setminus\{A,B\}$ such that $\{x_1,\ldots,x_{k-2},a\}\s C$.
  If $|A\cap B|\ge k+1$, then
	\begin{equation}
	|A\cup B\cup C|\le 3r- |A\cap B|-|A\cap C|\le 3r-(k+1)-(k-1)=3r-2k,
	\label{eq:stam}
	\end{equation}
 a contradiction.

 Next we define for a $k$-subset $K\in\ma{K}_2$, and the two distinct $r$-subsets $A, B\in\ma{F}$ containing it, the family of  $k$-subsets $\Phi_{K}:=\big(\binom{A}{k}\cup\binom{B}{k}\big)\setminus\{K\}$. Since $|A\cap B|=k$ we have that
	\begin{equation}
	|\Phi_{K}|=2\binom{r}{k}-2.
	\label{eq:stam2}
	\end{equation}
  Furthermore, by a similar calculation to \eqref{eq:stam} one can verify that
  \begin{equation}\label{phi_L}
    \begin{aligned}
      \Phi_K\s\ma{K}_1.
    \end{aligned}
  \end{equation}

We have the following claim.
  \begin{claim}\label{non-intersection}
$\Phi_{K}\cap \Phi_{K'}=\emptyset$ for distinct $K,K'\in\ma{K}_2$.
  \end{claim}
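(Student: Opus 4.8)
The plan is to argue by contradiction: suppose there exist distinct $K,K'\in\ma{K}_2$ and a $k$-subset $L$ with $L\in\Phi_K\cap\Phi_{K'}$. Let $A,B$ be the two edges of $\ma{F}$ containing $K$ (so $A\cap B$ is a $k$-set, by the analysis preceding the claim), and let $A',B'$ be the two edges containing $K'$, with $A'\cap B'$ a $k$-set as well. Since $L\in\Phi_K=\big(\binom{A}{k}\cup\binom{B}{k}\big)\setminus\{K\}$, after relabelling we may assume $L\subseteq A$; similarly we may assume $L\subseteq A'$. The key observation is that $L\in\ma{K}_1$ by \eqref{phi_L}, so $L$ has codegree exactly one in $\ma{F}$, which forces $A=A'$.

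Now I would exploit that $A=A'$ together with $K\ne K'$. We have $K\subseteq A$ and $K'\subseteq A$, and $K=A\cap B$, $K'=A\cap B'$, so $K$ and $K'$ are each $k$-subsets of the $r$-set $A$. Since $K\ne K'$, the three edges $A,B,B'$ are pairwise distinct (if $B=B'$ then $K=A\cap B=A\cap B'=K'$). The plan is then to bound $|A\cup B\cup B'|$: writing $|A\cup B\cup B'| = 3r - |A\cap B| - |A\cap B'| - |B\cap B'| + |A\cap B\cap B'|$ and using $|A\cap B|=|A\cap B'|=k$, this equals $3r-2k -|B\cap B'|+|A\cap B\cap B'| \le 3r-2k - |B\cap B'| + |A\cap B\cap B'|$. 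Since $A\cap B\cap B' \subseteq B\cap B'$, we get $|A\cup B\cup B'|\le 3r-2k$, contradicting the $\mr{G}_r(3r-2k,3)$-freeness of $\ma{F}$ — \emph{provided} $A,B,B'$ are genuinely three distinct edges, which we have checked.

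The main obstacle I anticipate is making sure the reduction to ``$A=A'$'' is airtight and that one does not accidentally land in a degenerate configuration where only two distinct edges are present (which would not violate $3$-uniform sparseness). The point where care is needed is the relabelling step: $L$ could a priori lie in $B$ rather than $A$, or in $B'$ rather than $A'$, but since $L$ has codegree one, whichever single edge of $\ma{F}$ contains $L$ must be common to the pair $\{A,B\}$ and to the pair $\{A',B'\}$; calling that common edge $A$ (resp.\ identifying it across the two pairs) is what yields $A=A'$ and simultaneously guarantees $B,B'\ne A$. Then $K\ne K'$ supplies $B\ne B'$, and the union bound closes the argument. A secondary subtlety is that we need $k\ge 2$ (true by hypothesis $r>k\ge2$) for the $(k-1)$-subset arguments invoked earlier to be available, but those are already established in the excerpt, so this plan may cite them freely.
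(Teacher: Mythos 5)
Your proposal is correct and follows essentially the same route as the paper: the common element of $\Phi_K\cap\Phi_{K'}$ has codegree one, so it forces a single edge $A$ containing both $K$ and $K'$, and then the union of the three distinct edges $A$, $B$, $B'$ (with $A\cap B=K$, $A\cap B'=K'$) has at most $3r-2k$ vertices by inclusion--exclusion, contradicting $\mr{G}_r(3r-2k,3)$-freeness. The paper compresses the ``common edge'' step into one sentence, but your more explicit handling of the relabelling and of the distinctness of $A,B,B'$ is exactly the intended argument.
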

Assuming the correctness of the claim, together  with \eqref{eq:stam2}, \eqref{phi_L} it follows that
\begin{equation}\label{formula2}
  \begin{aligned}
|\ma{K}_2|(2\binom{r}{k}-2)\leq |\ma{K}_1|.
  \end{aligned}
\end{equation}
It is also easy to see that
\begin{equation}\label{formula3}
  \begin{aligned}
|\ma{K}_1|+|\ma{K}_2|\le\binom{n}{k}.
  \end{aligned}
\end{equation}
Combining \eqref{formula1}, \eqref{formula2} and \eqref{formula3}, we conclude that
\begin{equation*}
  \begin{aligned}
   \binom{r}{k}|\ma{F}|&=|\ma{K}_1|+2|\ma{K}_2|=\fr{2\binom{r}{k}}{2\binom{r}{k}-1}(|\ma{K}_2|+|\ma{K}_1|)+
   \fr{1}{2\binom{r}{k}-1}\big((2\binom{r}{k}-2)|\ma{K}_2|-|\ma{K}_1|\big)\\
   &\le\fr{2\binom{r}{k}}{2\binom{r}{k}-1}(|\ma{K}_2|+|\ma{K}_1|)\le\fr{2\binom{r}{k}}{2\binom{r}{k}-1}\binom{n}{k},\\
  \end{aligned}
\end{equation*}
\noindent
as needed.

It remains to prove Claim \ref{non-intersection}. For the sake of contradiction, assume that there exist two distinct $k$-subsets $K,K'\in\ma{K}_2$ with $\Phi_{K}\cap \Phi_{K'}\neq \emptyset$,
then there is an edge $A\in \ma{F}$ with $K,K'\s A$, otherwise this would contradict the fact that $\Phi_{K}\cap \Phi_{K'} \s \ma{K}_1$ which follows by  \eqref{phi_L}.
Let $B,C\in \ma{F}$ be the edges such that $A\cap B=K$ and $A\cap C=K'$, then
$$|A\cup B\cup C|\leq 3r-|A\cap B|-|A\cap C|=3r-2k,$$
and we arrive at a contradiction, completing the proof of the claim.
$\hfill\square$

\section{Proof of Theorem \ref{maintheorem}}\label{fistlower}

\noindent In this section we prove Theorem \ref{maintheorem}.
By plugging $k=2$ in the upper bound of Theorem \ref{secondtheorem} we get  that $\limsup_{n\rightarrow\infty}\frac{f_r(n,3r-4,3)}{n^2}\le\fr{1}{r^2-r-1}$,
hence it remains to prove the other direction, i.e.,  $\liminf_{n\rightarrow\infty}\frac{f_r(n,3r-4,3)}{n^2}\ge\fr{1}{r^2-r-1}$.

\subsection{The  graph $G_t$}\label{step1}

\noindent In this subsection we define the  graph $G_t$, which is used in step (1) of Construction \ref{outlineofConstruction1} as the component graph of Lemma \ref{packinglemma},
but first we will need the following definition.

\begin{definition}[$\ma{H}$-embedding]\label{H-embedding}
For integers $r\le s, 1\le m\le \binom{s}{r}$, let $\ma{H}=\{A_1,\ldots,A_m\}\s\binom{[s]}{r}$ be an $r$-graph with $m$
edges and $S$ be a set of $s$ elements.
An {\it $\ma{H}$-embedding } from $[s]$ to $S$ is a bijection $\Psi_S:[s]\longrightarrow S$ that acts naturally on the edges of $\ma{H}$ as follows
\begin{equation*}
  \begin{aligned}
   \Psi_S:\ma{H}&\longrightarrow \binom{S}{r}\\
    A\in\ma{H}&\longmapsto\{\Psi_S(a):a\in A\}\in\binom{S}{r}.\\
  \end{aligned}
\end{equation*}
\noindent Clearly, the image of the embedding $\Psi_S(\ma{H}):=\{\Psi_{S}(A_1),\ldots,\Psi_{S}(A_m)\}$ is an $r$-graph  on the vertex set $S$ which forms a copy of $\ma{H}$.
\end{definition}

Let $X=\{x_1,...,x_m\}$ be a set of $m$ elements, and let  $S_1,\ldots,S_t$ be $t$ disjoint sets of size $s$ each, which are also disjoint from the set $X$.
Finally, for  $1\le i\le t$, let $$\Psi_{S_i}(\ma{H})=\{\Psi_{S_i}(A_1),\ldots,\Psi_{S_i}(A_m)\}\s\binom{S_i}{r},$$
be an $\ma{H}$-embedding in $S_i$.

\begin{definition}[The definition of $G_t$]\label{G_tdefinition}
The graph $G_t$, defined on the vertex set
$$V(G_t):=(\cup_{i=1}^t S_i)\cup X$$
of size $ts+m$, is constructed by taking the union of the following three edge sets:
 \begin{itemize}
   \item [\rm{(i)}] $\ma{E}_1=\{\text{edges connecting any two distinct vertices of $S_i,1\le i\le t$}\}$;
   \item [\rm{(ii)}] $\ma{E}_2=\{\text{edges connecting any two distinct vertices of $X$}\}$;
   \item [\rm{(iii)}] $\ma{E}_3=\{\text{edges connecting $x_j$ and each vertex of $\Psi_{S_i}(A_j)$,
   $1\le j\le m$, $1\le i\le t$}\}$.
 \end{itemize}
 \end{definition}
The following two simple observations are crucial for our construction.
\begin{itemize}
	\item The induced subgraph of $G_t$ on each of the $t+1$  subsets $S_1,\ldots,S_t$ and $X$ is the complete graph.
	\item The sets of edges $\ma{E}_i, \ma{E}_j$ are disjoint for $i\neq j$, therefore
\begin{equation}\label{E(G_t)}
  \begin{aligned}
  |G_t|=\sum_{i=1}^3|\ma{E}_i|, \text{ where } |\ma{E}_1|=t\binom{s}{2}, |\ma{E}_2|=\binom{m}{2}, \text{ and }\ma{E}_3=rmt.
  \end{aligned}
\end{equation}
\end{itemize}

\subsection{Lifting the $\ma{H}$-embeddings to an $(r+1)$-graph}\label{step2}

\noindent In this subsection, according to step (2) of Construction \ref{outlineofConstruction1} we lift the $t$ $r$-graphs $\Psi_{S_1}(\ma{H}),\ldots,\Psi_{S_t}(\ma{H})$ that were introduced above, to an $(r+1)$-graph $G_t(\ma{H})$ on $(\cup_{i=1}^t S_i)\cup X$, the vertex set $V(G_t)$.

We call the $t$ $s$-subsets $S_1,\ldots,S_t$ {\it petals}  and the  $m$-subset $X$ the {\it core}.
An $(r+1)$-subset $F\s V(G_t)$ is called {\it $S_i$-rooted} if it contains $r$ vertices of the petal $S_i$, and one vertex of $X$.
In such a case, $r(F):=F\cap S_i$ and $c(F):=F\cap X$ are called the {\it root} and the {\it core} of $F$, respectively.
Let $G_t(\ma{H},S_i)$ be the $(r+1)$-graph on the vertex set $S_i\cup X$, with the following $m$ $S_i$-rooted edges
\begin{equation}\label{G_t(H,S_i)}
  \begin{aligned}
    G_t(\ma{H},S_i)=\big\{\Psi_{S_i}(A_1)\cup\{x_1\},\ldots,\Psi_{S_i}(A_m)\cup\{x_m\}\big\},
  \end{aligned}
\end{equation}
where it is easy to verify that $\{r(F):F\in G_t(\ma{H},S_i)\}$ forms a copy of $\ma{H}$.
Next we define \begin{equation}\label{G_t(H)}
  \begin{aligned}
    G_t(\ma{H})=\cup_{i=1}^t G_t(\ma{H},S_i),
  \end{aligned}
\end{equation}
 to be the edge disjoint\footnote{Since the $S_i$'s are pairwise vertex disjoint,  the $G_t(\ma{H},S_i)$'s are pairwise edge disjoint.} union of the $t$ $(r+1)$-graphs $G_t(\ma{H},S_i).$

The following lemma shows that the $(r+1)$-graph $G_t(\ma{H})$ inherits the \emph{freeness} property from the $r$-graph $\ma{H}$.
More precisely, if $\ma{H}$ is  $\mr{G}_{r}(3r-4,3)$-free then $G_t(\ma{H})$ is  $\mr{G}_{r+1}(3(r+1)-4,3)$-free.
Note that an $r$-graph is called {\it almost linear} if any two distinct edges of it intersect in at most two vertices.
Moreover, it is easy to see that any edge $F\in G_t(\ma{H})$ is $S_i$-rooted for some $1\le i\le t$, and therefore for any (not necessarily distinct) $F_1,F_2\in G_t(\ma{H})$,
\begin{equation}\label{rootdisjointcore}
  \begin{aligned}
    r(F_1)\cap c(F_2)=\emptyset.
  \end{aligned}
\end{equation}
\begin{lemma}\label{F(G_t)} The $(r+1)$-graph $G_t(\ma{H})$ defined in \eqref{G_t(H)} satisfies the following properties:
  \begin{itemize}
    \item [\rm{(i)}] Any two distinct edges of $G_t(\ma{H})$ that are rooted in the same petal have distinct cores;
    \item [\rm{(ii)}] Any two distinct edges of $G_t(\ma{H})$ that are rooted in different petals have disjoint roots;
    \item [\rm{(iii)}] $G_t(\ma{H})$ has  $mt$ $(r+1)$-edges;
    \item [\rm{(iv)}] The vertex set of any $(r+1)$-edge of $G_t(\ma{H})$ induces a complete subgraph in the graph  $G_t$;
    \item [\rm{(v)}] If $\ma{H}$ is almost linear, then $G_t(\ma{H})$ is also almost linear;
    moreover, if $\ma{H}$ is also $\mr{G}_r(3r-4,3)$-free, then $G_t(\ma{H})$ is $\mr{G}_{r+1}(3r-1,3)$-free.
  \end{itemize}
\end{lemma}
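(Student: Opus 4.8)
The plan is to dispatch (i)--(iv) as immediate structural observations and to reserve the real work for (v). The remark underlying everything is that, by construction, every edge of $G_t(\ma{H})$ has the form $\Psi_{S_i}(A_j)\cup\{x_j\}$ for a unique petal index $i\in[t]$ (the one for which $S_i$ meets the edge) and a unique $j\in[m]$; moreover $j$ is recovered either from the core $c(F)=\{x_j\}$, since the $x_j$ are distinct, or from the root $r(F)=\Psi_{S_i}(A_j)$, since the $A_j$ are distinct edges of $\ma{H}$ and $\Psi_{S_i}$ is a bijection. With this in hand, (i) follows because two same-petal edges with equal core share the same index $j$ and hence coincide; (ii) follows because roots of edges rooted in distinct petals sit in the disjoint sets $S_i$ and $S_{i'}$; and (iii) follows because each $G_t(\ma{H},S_i)$ contributes exactly $m$ pairwise distinct edges, while edges from different petals are distinct (they meet different $S_i$'s), so the edge-disjoint union has $mt$ edges. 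For (iv) I would simply note that the $r$ root-vertices of an edge $\Psi_{S_i}(A_j)\cup\{x_j\}$ are pairwise joined in $G_t$ via the clique $\ma{E}_1$ on $S_i$, while $x_j$ is joined to each of them via the edges of $\ma{E}_3$ indexed by $(i,j)$; hence all $\binom{r+1}{2}$ pairs inside the edge are present in $G_t$, i.e., the edge induces a $K_{r+1}$.

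For (v) the strategy is to prove almost-linearity first and then $\mr{G}_{r+1}(3r-1,3)$-freeness, using that $3(r+1)-4=3r-1$. For almost-linearity, given distinct edges $F,F'$: if they are rooted in the same petal $S_i$, then $F\cap F'=\Psi_{S_i}(A_j\cap A_{j'})$ has size $|A_j\cap A_{j'}|\le 2$ by the almost-linearity of $\ma{H}$; if they are rooted in different petals, their root parts are disjoint and $F\cap F'\subseteq\{x_j\}\cap\{x_{j'}\}$ has size at most one. For the freeness statement I must show $|F_1\cup F_2\cup F_3|\ge 3r$ for any three distinct edges, and I would organise this by how the three edges distribute among petals. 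When all three lie in one petal $S_i$, the three cores are distinct vertices of $X$ by (i), and the three roots are a copy of three distinct edges of $\ma{H}$, whose union has at least $(3r-4)+1=3r-3$ vertices by the $\mr{G}_r(3r-4,3)$-freeness of $\ma{H}$; since $X$ and $S_i$ are disjoint, this totals at least $3r$. When two lie in one petal and one in another, almost-linearity gives that the two same-petal edges already span $2(r+1)-|F_1\cap F_2|\ge 2r$ vertices, and the third edge adds its $r$ root-vertices, which lie in a petal disjoint from everything counted so far, for a total of at least $3r$. When the three petals are pairwise distinct, (ii) makes the three roots pairwise disjoint, immediately giving at least $3r$.

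The only step with any real substance is this last case analysis, and even there the difficulty is purely bookkeeping: one must keep the core vertices of $X$ separate from the petal vertices throughout, and one must invoke the correct hypothesis in each case — the inductive $\mr{G}_r(3r-4,3)$-freeness of $\ma{H}$ enters only in the ``all three in one petal'' case, whereas the other two cases rely solely on the pairwise disjointness of the petals together with the (already established) almost-linearity of $G_t(\ma{H})$. No other ingredient is needed.
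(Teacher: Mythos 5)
Your proposal is correct and follows essentially the same route as the paper: items (i)--(iv) are dispatched as immediate consequences of the construction, almost-linearity is split into the same-petal/different-petal dichotomy, and the $\mr{G}_{r+1}(3r-1,3)$-freeness is proved by the identical three-way case analysis on how the edges distribute among petals, invoking the $\mr{G}_r(3r-4,3)$-freeness of $\ma{H}$ only in the single-petal case. No discrepancies.
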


\begin{proof}
The first four statements follow easily from the definitions of $G_t, G_t(\ma{H},S_i)$ and $G_t(\ma{H})$.
To prove the first part of (v) we show that $|F_1\cap F_2|\le 2$ for distinct edges  $F_1, F_2\in G_t(\ma{H})$.
If $F_1$ and $F_2$ are rooted in the same petal $S_i$, then since $\{r(F): F\in G_t(\ma{H},S_i)\}$ forms a copy of $\ma{H}$, which is  almost linear, together with \eqref{rootdisjointcore}
we conclude that
$$|F_1\cap F_2|=|\big(r(F_1)\cup c(F_1)\big)\cap\big(r(F_2)\cup c(F_2)\big)|=|r(F_1)\cap r(F_2)|\le 2,$$
as needed.
On the other hand, if $F_1$ and $F_2$ are rooted in different petals, then by \eqref{rootdisjointcore} and (ii) it follows that $F_1\cap F_2=c(F_1)\cap c(F_2)$, implying that $|F_1\cap F_2|\le 1$.

To prove the second part of  (v) consider three distinct  $(r+1)$-edges  $F_1,F_2,F_3\in G_t(\ma{H})$. We have the following three cases:
\begin{itemize}
  \item [$(a)$] If $F_1,F_2,F_3$ are rooted in three distinct petals, then by (ii) $r(F_1),r(F_2),r(F_3)$ are pairwise disjoint, hence
      $$|F_1\cup F_2\cup F_3|= |r(F_1)\cup r(F_2)\cup r(F_3)|+|c(F_1)\cup c(F_2)\cup c(F_3)|\ge 3r+1;$$
  \item [$(b)$] If $F_1,F_2,F_3$ are rooted in two distinct petals, say, $F_1,F_2$ are $S_i$-rooted and $F_3$ is $S_j$-rooted for $ i\neq j$, then
   $$|F_1\cup F_2\cup F_3|\ge |F_1\cup F_2|+|r(F_3)|\ge 3r,$$
      as  $|F_1\cup F_2|\ge 2r$ by the first part of (v), and by \eqref{rootdisjointcore} and (ii), $r(F_3)$ is disjoint from $F_1\cup F_2$;
  \item [$(c)$] If $F_1,F_2,F_3$ are rooted in the same petal, say $S_i$,  then since $\{r(F):F\in G_t(\ma{H},S_i)\}$ is a copy of $\ma{H}$ in $S_i$,
  then it follows from \eqref{rootdisjointcore}, (i) and the $\mr{G}_r(3r-4,3)$-freeness of $G_t(\ma{H},S_i)$ that
$$|F_1\cup F_2\cup F_3|= |r(F_1)\cup r(F_2)\cup r(F_3)|+|c(F_1)\cup c(F_2)\cup c(F_3)|\ge 3r,$$ as needed.
\end{itemize}
\end{proof}

\subsection{Constructing $G_t$-packings in a large complete graph}\label{step3}

\noindent Following step (3) of Construction \ref{outlineofConstruction1}, we introduce below the key idea of the recursive construction.

By applying Lemma \ref{packinglemma} with the graph $G_t$ and large enough $n$, one obtains a $G_t$-packing in $K_n$, denoted by $\mr{G}=\{G^1_t,\ldots,G^l_t\}$, which contains roughly $l=(1-o(1))n^2/|G_t|$ edge disjoint copies of $G_t$.
For each $1\le i\le l$, construct on $V(G^i_t)$ the $(r+1)$-graph $G_t^i(\ma{H})$, as defined in \eqref{G_t(H)}. Let
\begin{equation}\label{stamstam}
\ma{F}:=\cup_{i=1}^l G_t^i(\ma{H})
\end{equation} be the union of those $l$ $(r+1)$-graphs.
Recall that Lemma \ref{F(G_t)} implies that if $\ma{H}$ has the freeness property then so has  $G_t^i(\ma{H})$ for each $1\le i\le l$.
The next lemma shows that $\ma{F}$ {\it preserves} the $\mr{G}_{r+1}(3r-1,3)$-freeness of the $G_t^i(\ma{H})$'s.

\begin{lemma}\label{mainlemma}
  Let  $r\ge 3$ be an integer, and $\ma{H}\s\binom{[s]}{r}$ be  an almost linear $\mr{G}_r(3r-4,3)$-free $r$-graph with $m$ edges.
  Then $\ma{F}$  defined in \eqref{stamstam} is almost linear, $\mr{G}_{r+1}(3r-1,3)$-free, with  $mtl$ edges.
\end{lemma}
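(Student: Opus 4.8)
The plan is to bottleneck everything through the per-copy statement of Lemma \ref{F(G_t)}, applied to each of the $(r+1)$-graphs $G_t^i(\ma{H})$, together with the two extra properties (i) and (ii) of the induced $G_t$-packing $\mr{G}=\{G_t^1,\ldots,G_t^l\}$ supplied by Lemma \ref{packinglemma}. First I would settle the edge count. By Lemma \ref{F(G_t)}(iii) each $G_t^i(\ma{H})$ has exactly $mt$ edges, so it suffices to check that the $G_t^i(\ma{H})$ are pairwise edge disjoint. If an $(r+1)$-set $F$ were an edge of both $G_t^i(\ma{H})$ and $G_t^j(\ma{H})$ with $i\ne j$, then by Lemma \ref{F(G_t)}(iv) it would span a complete subgraph in both $G_t^i$ and $G_t^j$; but $|F|=r+1\ge 4$, contradicting property (i), which forces $|V(G_t^i)\cap V(G_t^j)|\le 2$. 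Hence $|\ma{F}|=mtl$.

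Next I would isolate the one cross-copy fact that does the real work: if $F\in G_t^i(\ma{H})$, $F'\in G_t^j(\ma{H})$ with $i\ne j$, and $Y\subseteq V(G_t^i)$ is any vertex set with $F\subseteq Y$, then $|F'\cap Y|\le 1$. Indeed $F'\cap Y\subseteq V(G_t^i)\cap V(G_t^j)$, which has size at most two by (i); if it had size exactly two it would equal $V(G_t^i)\cap V(G_t^j)$ and, being a $2$-subset of $F'$, would span an edge of $G_t^j$ by Lemma \ref{F(G_t)}(iv), contradicting (ii). Taking $Y=F$ this already shows distinct edges from different copies meet in at most one vertex, so together with Lemma \ref{F(G_t)}(v) (for edges in a common copy) and property (i) it follows that $\ma{F}$ is almost linear.

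For the $\mr{G}_{r+1}(3r-1,3)$-freeness I would take three distinct edges $F_1,F_2,F_3\in\ma{F}$ and prove $|F_1\cup F_2\cup F_3|\ge 3r$, splitting according to how many of the host copies coincide. If all three lie in one $G_t^i(\ma{H})$, this is precisely the $\mr{G}_{r+1}(3r-1,3)$-freeness granted by Lemma \ref{F(G_t)}(v). If exactly two coincide, say $F_1,F_2\in G_t^i(\ma{H})$ and $F_3\in G_t^j(\ma{H})$ with $i\ne j$, then $|F_1\cup F_2|\ge 2(r+1)-2=2r$ by almost linearity inside the copy, while the bound above with $Y=F_1\cup F_2$ gives $|F_3\cap(F_1\cup F_2)|\le 1$, whence $|F_3\setminus(F_1\cup F_2)|\ge r$ and the union has at least $3r$ vertices. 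If the three copies are pairwise distinct, the cross-copy fact gives $|F_a\cap F_b|\le 1$ for every pair, so by inclusion--exclusion $|F_1\cup F_2\cup F_3|\ge 3(r+1)-3=3r$.

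I expect the middle case to be the main obstacle, and the reason the full strength of Lemma \ref{packinglemma} — an \emph{induced} packing with properties (i) and (ii), rather than merely a large packing as in \cite{glock2018triple} — is needed here: a crude count in that case yields only $3r-1$ vertices, exactly one short, since $F_1\cup F_2$ may genuinely have only $2r$ vertices. It is the refinement $|F_3\cap(F_1\cup F_2)|\le 1$, forced by combining (i), (iv) of $G_t(\ma{H})$, and (ii), that closes this last gap, and getting that step phrased cleanly (in particular, reducing the ``two-in-one'' and ``three-distinct'' cases to the same auxiliary inequality about $F'\cap Y$) is where I would spend the most care.
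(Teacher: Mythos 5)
Your proof is correct and uses the same essential ingredients as the paper (Lemma~\ref{F(G_t)}(iv),(v) together with the induced-packing properties (i),(ii) of Lemma~\ref{packinglemma}), so I would classify it as the same approach. The only organizational difference is in the freeness step: you argue directly by a three-way case split on how many host copies are involved, whereas the paper argues by contradiction, first locating a pair with $|F_i\cap F_j|\ge 2$ and then showing all three edges must lie in a single $G_t^i(\ma{H})$; the ``cross-copy'' inequality $|F'\cap Y|\le 1$ you isolate is exactly the step the paper carries out inline when ruling out $F_3\in G_t^j(\ma{H})$ for $j\neq i$.
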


\begin{proof}
We begin by showing that $|\ma{F}|=mtl$.
It is enough to prove that $\ma{F}$ is an edge disjoint union of the $G_t^i(\ma{H})$'s, $1\le i\le l$.
Indeed, by Lemma \ref{F(G_t)} (iv), each $(r+1)$-edge in $G_t^i(\ma{H})$ induces a complete subgraph in $G_t^i$, implying that for any $1\le i\neq j\le l$, $G_t^i(\ma{H})$ and $G_t^j(\ma{H})$ cannot have any common $(r+1)$-edge, since otherwise $G_t^i$ and $G_t^j$ would have a common 2-edge, contradicting the definition of a $G_t$-packing.

For the almost linearity we prove the following stronger claim.

\begin{claim}\label{almostlinear}
For any two distinct $(r+1)$-edges $F_1, F_2\in \ma{F}$, if there exists an $1\le i\le l$ such that $\{F_1,F_2\}\s G_t^i(\ma{H})$, then $|F_1\cap F_2|\le 2$; otherwise $|F_1\cap F_2|\le 1$.
\end{claim}

The first case of the claim follows easily from the almost linearity of $\ma{H}$ and Lemma \ref{F(G_t)} (v).
To prove the second case, suppose there exist $1\le i\neq j\le l$ such that $F_1\in G_t^i(\ma{H})$ and $F_2\in G_t^j(\ma{H})$.
By Lemma \ref{F(G_t)} (iv) $F_1$ (resp. $F_2$) induces a complete graph on $V(G_t^i)$ (resp. $V(G_t^j)$).
On the other hand, by construction $G_t^i$ and $G_t^j$ are edge disjoint, hence clearly $|F_1\cap F_2|\le 1$.

Next we show that $\ma{F}$ is $\mr{G}_{r+1}(3r-1,3)$-free.
Assume to the contrary that there exist three distinct $(r+1)$-edges $F_1,F_2,F_3\in\ma{F}$ such that $|F_1\cup F_2\cup F_3|\le 3r-1$.
Hence, in such a case there exist $1\le i\neq j\le 3$ such that $|F_i\cap F_j|\ge 2$, since otherwise $|F_1\cup F_2\cup F_3|\ge 3r$.
Without loss of generality, assume that $|F_1\cap F_2|\ge 2$.
Thus it follows from Claim \ref{almostlinear} that there exists $1\le i\le l$ such that $F_1,F_2\in G_t^i(\ma{H})$, and we actually have $|F_1 \cap F_2|=2$, as $G_t^i(\ma{H})$ is almost linear.
We claim that $F_3$ also belongs to $G_t^i(\ma{H})$.
Then given the $\mr{G}_r(3r-4,3)$-freeness of $\ma{H}$, we arrive at a contradiction by Lemma \ref{F(G_t)} (v).

For the sake of contradiction, assume that  $F_3\in G_t^j(\ma{H})$ for  $j\neq i$.
By the inclusion-exclusion principle, it is easy to check that $|F_3\cap(F_1\cup F_2)|\ge 2$, which implies that $|V(G_t^j)\cap V(G_t^i)|\ge 2$.
By Lemma \ref{packinglemma} (i) it follows  that $|V(G_t^j)\cap V(G_t^i)|=2$.
Let $A=V(G_t^j)\cap V(G_t^i)$, then clearly $A\s F_3$, and by Lemma \ref{F(G_t)} (iv)  $A$ forms an edge in $G_t^j$,
which contradicts Lemma \ref{packinglemma} (ii).
\end{proof}

\subsection{Establishing the lower bound of $\pi_d(r,2,3)$}

\noindent To prove the lower bound $\liminf_{n\rightarrow\infty}\fr{f_r(n,3r-4,3)}{n^2}\ge\fr{1}{r^2-r-1}$ it suffices to show that for any  $\epsilon>0$, there exists an integer $n(\epsilon)>0$ such that
for any $n>n(\epsilon)$, there exists an  almost linear $\mr{G}_r(3r-4,3)$-free $r$-graph on $n$ vertices with at least $(\fr{1}{r^2-r-1}-\epsilon)n^2$ $r$-edges.
We will prove this statement by induction on the uniformity parameter $r\geq 3$.

The base case $r=3$ follows directly from the work of Glock \cite{glock2018triple}.
Next, assume that the statement holds for $r\ge 3$, and we prove it for $r+1$.
Given $\epsilon>0$, let $\delta=\delta(\epsilon)>0$ be a sufficiently small constant.
By the induction hypothesis, given $\delta$ there exists an integer $s(\delta)>0$ such that for any $s>s(\delta)$,
there exists  an almost linear $\mr{G}_r(3r-4,3)$-free $r$-graph $\ma{H}\s\binom{[s]}{r}$ with $m$ edges, where
\begin{equation}\label{m}
  \begin{aligned}
    m\ge(\fr{1}{r^2-r-1}-\delta)s^2.
  \end{aligned}
\end{equation}
Let $t$ be a sufficiently large integer satisfying
\begin{equation}\label{tau}
  \begin{aligned}
    \fr{m}{t}<\delta.
  \end{aligned}
\end{equation}
By applying Lemmas \ref{packinglemma} and \ref{mainlemma} with the component graph $G_t$ given in Definition \ref{G_tdefinition}, one can construct an almost linear $\mr{G}_{r+1}(3r-1,3)$-free $(r+1)$-graph $\ma{F}=\cup_{i=1}^l G_t^i(\ma{H})$
on $n>n_0$\footnote{Here $n_0$ is a constant given by applying Lemma \ref{packinglemma} with $G:=G_t$ and $\epsilon:=\delta$.} vertices, with $|\ma{F}|=tml$ edges, where
 \begin{equation}
 \label{sigma}
 l\ge\fr{(1-\delta)n^2}{2(t\binom{s}{2}+\binom{m}{2}+rmt)}	.
\end{equation}
Hence, 
\begin{equation}\label{E(F)}
  \begin{aligned}
  |\ma{F}|&\ge tm\cdot\fr{(1-\delta)n^2}{2(t\binom{s}{2}+\binom{m}{2}+rmt)}\ge\fr{(1-\delta)n^2}{s^2/m+m/t+2r}\geq
	\fr{(1-\delta)n^2}{\fr{r^2-r-1}{1-(r^2-r-1)\delta}+\delta+2r},\\
  \end{aligned}
\end{equation}
where the last inequality follows by  \eqref{m} and \eqref{tau}.
For $\delta=0$ the right hand side of \eqref{E(F)} is greater than
 \begin{equation*}
  \begin{aligned}
   \fr{(1-\epsilon)n^2}{r^2+r-1}=\fr{(1-\epsilon)n^2}{(r+1)^2-(r+1)-1}.
  \end{aligned}
\end{equation*}
Then by continuity there exists  $\delta>0$ for which $\ma{F}$ has at least $\fr{(1-\epsilon)n^2}{(r+1)^2-(r+1)-1}$ edges, completing the induction.
$\hfill\square$

\begin{remark}\label{remark}
Let $\ma{H}\s\binom{[n]}{r}$ be a $\mr{G}_r(3r-4,r)$-free $r$-graph with $\frac{(1-o(1))n^2}{r^2-r-1}$ edges, where $o(1)\rightarrow 0$ as $n\rightarrow\infty$. It is clear by definition that $\ma{H}$ contains no $2$-subsets with codegree larger than $2$. In fact, it follows from \eqref{formula2} and \eqref{formula3} (with $k=2$) that the proportions of $2$-subsets in $[n]$ with codegree $0,1,2$ must be (approximately, as $n\to\infty$) $0,1-\fr{1}{r^2-r-1},\fr{1}{r^2-r-1}$, respectively.
\end{remark}

\section{Proof of Theorem \ref{secondtheorem}, the lower bound}\label{secondlower}
\subsection{From strongly perfect hashing matrices to sparse hypergraphs}\label{defofperfecthash}

\noindent In this subsection we define the notion of a strongly $3$-perfect hashing  matrix, and show that
any such matrix gives rise to a sparse hypergraph with relatively many edges (see  Lemma \ref{matr}).

We begin with some notations.  Let $\ma{M}$ be an $r\times m$ matrix over $Q$, an alphabet of size $q$, and for $1\le j\le m$  let
$$\vec{c}_j=(c_{1,j},\ldots,c_{r,j})^T\in Q^r,$$
be the $j$-th column of $\ma{M}$.
We say that the $i$-th row of $\ma{M}$ {\it separates} a subset of columns $T$, if the entries of row $i$ restricted to columns in $T$ are all distinct,
i.e., $\{c_{i,j}:\vec{c}_j\in T\}$ is a set of $|T|$ distinct elements of $Q$.

A matrix is called {\it $3$-perfect hashing} (see,  e.g. \cite{Fredman84}) if any three distinct columns of it are separated by at least one row.
In the literature, matrices with different perfect hashing properties have been studied extensively.
The reader is referred to \cite{shangguanperfecthash} and the references therein for a detailed introduction to this topic.
Here we introduce a slightly stronger notion which we term \emph{strongly} $3$-perfect hashing, but first we will need the following notation.
For  $t$ columns $\vec{c}_1,\ldots,\vec{c}_t$ of $\ma{M}$, let $I(\vec{c}_1,\ldots,\vec{c}_t)\s[r]$ denote the collection of row indices for which $\vec{c}_1,\ldots,\vec{c}_t$ have equal entries,
i.e., $i\in I(\vec{c}_1,\ldots,\vec{c}_t)$ if and only if $c_{i,1}=\cdots=c_{i,t}$.

\begin{definition}\label{stronghashing}
  An $r\times q^k$ matrix $\ma{M}$ over $Q$ is called {\it strongly $3$-perfect hashing} if any three distinct columns $\vec{c}_1,\vec{c}_2,\vec{c}_3$ of $\ma{M}$  are separated by \emph{more} than $r-2k+|I(\vec{c}_1,\vec{c}_2,\vec{c}_3)|$ rows.
\end{definition}

Clearly, this definition holds trivially if $r-2k+|I(\vec{c}_1,\vec{c}_2,\vec{c}_3)|< 0$.
However, if for  any three distinct columns $\vec{c}_1,\vec{c}_2,\vec{c}_3$, $r-2k+|I(\vec{c}_1,\vec{c}_2,\vec{c}_3)|\geq 0,$
then a \emph{strongly} $3$-perfect hashing matrix is also a $3$-perfect hashing matrix, justifying the name of this property.


\begin{observation}\label{obs}
Any $r\times m$ matrix $\ma{M}$ over $Q$ defines an $r$-partite $r$-graph $\ma{H}_{\ma{M}}$ with $rq$ vertices and $m$ edges as follows.
The vertex set admits a partition $V(\ma{H}_{\ma{M}})=\cup_{i=1}^r V_i$, where
$$V_i=\{(i,\alpha):\alpha\in Q\}$$
is the $i$-th vertex part of size $q$.
The $m$ edges of $\ma{H}_{\ma{M}}$ are defined by the $m$ columns of $\ma{M}$ as follows
$$A_j:=\{(1,c_{1,j}),\ldots,(r,c_{r,j})\}\s(\cup_{i=1}^r V_i), 1\le j\le m.$$
It is easy to verify that for any edge and any vertex part we have $|A_j|=r, |V_i|=q \text{ and }|A_j\cap V_i|=1$.
\end{observation}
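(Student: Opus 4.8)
\textbf{Proof proposal for Observation \ref{obs}.} The statement is essentially a verification that the proposed construction is well-defined and has the claimed parameters, so the plan is to spell out the three small checks in order. First I would fix the vertex set $V(\ma{H}_{\ma{M}}):=\cup_{i=1}^r V_i$ with $V_i=\{(i,\alpha):\alpha\in Q\}$, observe that the pairs $(i,\alpha)$ with $i\in[r]$, $\alpha\in Q$ are pairwise distinct, so the $V_i$ are pairwise disjoint, each of cardinality $|Q|=q$, giving $|V(\ma{H}_{\ma{M}})|=rq$ as claimed.

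Next I would turn to the edges. For each column index $1\le j\le m$ set $A_j:=\{(i,c_{i,j}):1\le i\le r\}$. For a fixed $j$ and fixed part index $i$, the only element of $A_j$ whose first coordinate equals $i$ is $(i,c_{i,j})$, so $|A_j\cap V_i|=1$; summing over $i\in[r]$ and using that the $V_i$ partition the vertex set yields $|A_j|=\sum_{i=1}^r|A_j\cap V_i|=r$. In particular each $A_j$ is an $r$-subset of $\cup_{i=1}^r V_i$ meeting every part in exactly one vertex, which is precisely the $r$-partiteness condition with parts $V_1,\ldots,V_r$.

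Finally I would argue that $\ma{H}_{\ma{M}}:=\{A_1,\ldots,A_m\}$ has exactly $m$ edges by noting that the column $\vec c_j$ can be recovered from the set $A_j$ (namely $c_{i,j}$ is the unique $\alpha$ with $(i,\alpha)\in A_j$), hence $j\mapsto A_j$ is injective whenever the columns of $\ma{M}$ are pairwise distinct; since in all our applications $\ma{M}$ has distinct columns (indeed it will be $3$-perfect hashing), the $A_j$ are distinct and $|\ma{H}_{\ma{M}}|=m$. There is no real obstacle here — the only point worth flagging is this implicit distinctness of columns, which I would state explicitly so that ``$m$ edges'' is literally correct; everything else is immediate from the definitions.
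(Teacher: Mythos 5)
Your verification is correct and matches what the paper leaves implicit (the paper states the observation without proof, as an immediate consequence of the definitions). Your remark that $|\ma{H}_{\ma{M}}|=m$ tacitly requires the columns of $\ma{M}$ to be pairwise distinct is a fair point the paper glosses over; it holds in the paper's application since the columns of $\mathbb{F}_q^{<k}[x,\vec{v}]$ come from distinct polynomials of degree less than $k$ evaluated at $r\ge k$ distinct points, which cannot coincide by Fact \ref{mds}.
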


In the remaining part of this section we view matrices (resp. columns of the matrices) and
multipartite hypergraphs (resp. edges of the hypergraphs) as equivalent objects.

Next we present the proof of Lemma \ref{matr}, but first note that  for any $1\le i,j,l\le m$,
$$|I(\vec{c}_{i},\vec{c}_{j},\vec{c}_{l})|=|A_{i}\cap A_j \cap A_l| \text{\qquad and \qquad}
|I(\vec{c}_{i},\vec{c}_{j})=|A_{i}\cap A_{j}|.$$

\vspace{10pt}

\noindent{\it\textbf{Proof of Lemma \ref{matr}.}}
 Given an $r\times q^k$  strongly $3$-perfect hashing matrix $\ma{M}$ over $Q$,  let $\ma{H}_{\ma{M}}=\{A_1,\ldots,A_{q^k}\}$   be the corresponding  $r$-partite $r$-graph with $rq$ vertices and $q^k$ edges, given by Observation \ref{obs}.
 We claim that  $\ma{H}_{\ma{M}}$ is $\mr{G}_r(3r-2k,3)$-free, i.e., for any three distinct edges $A_{i},A_{j},A_{l}\in\ma{H}_{\ma{M}}$,
  $$|A_{i}\cup A_j \cup A_l|> 3r-2k.$$
Since $\ma{M}$ is  strongly $3$-perfect hashing, the columns  $\vec{c}_{i},\vec{c}_{j},\vec{c}_{l}$ are separated by more than
$r-2k+|I(\vec{c}_{i},\vec{c}_{j},\vec{c}_{l})|$ rows. Equivalently, $\vec{c}_{i},\vec{c}_{j},\vec{c}_{l}$ are not separated by less than $2k-|I(\vec{c}_{i_1},\vec{c}_{i_2},\vec{c}_{i_3})|$ rows.
	Hence,
	\begin{equation*}
    \begin{aligned}
      |A_{i}\cup A_j \cup A_l|&\ge 3(r-2k+|I(\vec{c}_{i},\vec{c}_{j},\vec{c}_{l})|+1)+2(2k-|I(\vec{c}_{i},\vec{c}_{j},\vec{c}_{l})|-1)
      -|I(\vec{c}_{i},\vec{c}_{j},\vec{c}_{l})|\\
      &=3r-2k+1,\\
    \end{aligned}
  \end{equation*} as desired.
$\hfill\square$

In order to construct matrices satisfying this useful property we introduce next a technical lemma of F\"uredi \cite{FurediCance2012}.

\subsection{A technical lemma of F\"uredi}\label{furedi}

\noindent In this subsection, we introduce a lemma of F\"uredi \cite{FurediCance2012} on a generalized linear independence property of polynomials.
Let us begin with some necessary terminology.

For a prime power $q$ and a positive integer $k$, let $\mathbb{F}_q$ be the finite field with $q$ elements, and $\mathbb{F}_q^{<k}[x]$ be the set of polynomials of degree less than $k$, with coefficients in $\mathbb{F}_q$.
Clearly $|\mathbb{F}_q^{<k}[x]|=q^k$.
Let $p_1(x),p_2(x),p_3(x)\in\mathbb{F}_q^{<k}[x]$ be three arbitrary polynomials, and $k_1,k_2,k_3$ be positive integers such that $$\sum_{i=1}^3 k_i\le k \text{\quad and\quad} k_i\le k-\deg(p_i)\quad\text{for each $1\le i\le 3$.}$$
The polynomials $p_1(x),p_2(x),p_3(x)$ are said to be {\it $(k_1,k_2,k_3)$-independent,} if for any $q_i(x)\in\mathbb{F}_q^{<k_i}[x],1\le i\le 3$, the equality
$$q_1(x)p_1(x)+q_2(x)p_2(x)+q_3(x)p_3(x)\equiv 0\in \mathbb{F}_q^{<k}[x]$$
holds if and only if each $q_i(x)$ is the zero polynomial in $\mathbb{F}_q^{<k}[x]$.
Equivalently, all the $q^{\sum_{i=1}^3 k_i}$ polynomials of the form $\sum_{i=1}^3 q_i(x)p_i(x)$ are distinct in $\mathbb{F}_q^{<k}[x]$.
Note that the case $k_1=k_2=k_3=1$ reduces to the usual $\mathbb{F}_q$-linear independence of three polynomials in $\mathbb{F}_q^{<k}[x]$.

A vector $\vec{v}=(\alpha_1,\ldots,\alpha_r)\in\mathbb{F}_q^r$ is called {\it nonrepetitive} if all of its entries are pairwise distinct, i.e., $\alpha_i\neq \alpha_j$ for $ i\neq j$.
Given such a vector $\vec{v}\in\mathbb{F}_q^r$ with $r\geq k$, let $\mathbb{F}_q^{<k}[x,\vec{v}]$ be the $k$-dimensional subspace of $\mathbb{F}_q^{r}$ defined as
\begin{equation}\label{subspace}
  \begin{aligned}
    \mathbb{F}_q^{<k}[x,\vec{v}]:=\Big\{\vec{c}_f=\big(f(\alpha_1),\ldots,f(\alpha_r)\big)^T\in\mathbb{F}_q^r:~f\in\mathbb{F}_q^{<k}[x]\Big\}.
  \end{aligned}
\end{equation}
It is not too difficult to verify that $\mathbb{F}_q^{<k}[x,\vec{v}]$ is indeed a $k$-dimensional subspace of $\mathbb{F}_q^{r}$.
Given a set $X\s[r]$ of indices, we define the {\it annihilator} polynomial:
$$p_X(x,\vec{v})=\prod_{i\in X}(x-\alpha_i).$$

The following lemma is crucial for Construction \ref{outlineofConstruction2}, and it was proved by F\"uredi \cite{FurediCance2012}.

\begin{lemma}[see Lemma 10.3 and Corollary 10.4 \cite{FurediCance2012}]\label{lowerfirst}
  Let $k$ be a positive integer and $q$ be a prime power.
  Then for all but at most $k(k-1)q^{2k-1}$ nonrepetitive vectors $\vec{v}=(\alpha_1,\ldots,\alpha_{2k})\in\mathbb{F}_q^{2k}$,
  the polynomials $p_{Z_1}(x,\vec{v}),p_{Z_2}(x,\vec{v}),p_{Z_3}(x,\vec{v})$ are
  $$(k-|Z_1|,k-|Z_2|,k-|Z_3|)\text{-independent,}$$
  for every partition $[2k]=Z_1\cup Z_2\cup Z_3$, with $1\le |Z_i|<k$ for $i=1,2,3$.
\end{lemma}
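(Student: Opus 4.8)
The plan is to translate the $(k-|Z_1|,k-|Z_2|,k-|Z_3|)$-independence of the annihilator polynomials into the non-vanishing of an explicit polynomial in the coordinates of $\vec v=(\alpha_1,\dots,\alpha_{2k})$, and then to bound, uniformly over all admissible partitions, the set of vectors at which this polynomial degenerates. Fix a partition $[2k]=Z_1\cup Z_2\cup Z_3$ with $1\le|Z_i|<k$ and put $k_i:=k-|Z_i|$; since $\sum_i|Z_i|=2k$ we have $\deg p_{Z_i}=|Z_i|=k-k_i$ and $\sum_i k_i=k$, so the requirements $\sum_i k_i\le k$ and $k_i\le k-\deg p_{Z_i}$ hold with equality. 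Now $p_{Z_1},p_{Z_2},p_{Z_3}$ are $(k_1,k_2,k_3)$-independent precisely when the $\mathbb{F}_q$-linear map $(q_1,q_2,q_3)\mapsto\sum_i q_ip_{Z_i}$, from $\bigoplus_i\mathbb{F}_q^{<k_i}[x]$ to $\mathbb{F}_q^{<k}[x]$, is injective; as both spaces have dimension $k$, this is equivalent to the non-vanishing of the determinant $D_{(Z)}(\vec v)$ of the $k\times k$ ``generalized Sylvester'' matrix whose columns are the coefficient vectors of the $k$ polynomials $x^{\ell}p_{Z_i}$ with $0\le\ell<k_i$, $i=1,2,3$. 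Each entry of this matrix is, up to sign, an elementary symmetric function of $\{\alpha_j:j\in Z_i\}$ (or zero), so $D_{(Z)}$ is a polynomial in $\alpha_1,\dots,\alpha_{2k}$. One also records, for later use, that in any nontrivial dependence $\sum_i q_ip_{Z_i}=0$ all three $q_i$ are nonzero: if $q_3=0$, then $p_{Z_2}\mid q_1p_{Z_1}$, and since $\gcd(p_{Z_1},p_{Z_2})=1$ while $\deg q_1<k_1\le|Z_2|$, this forces $q_1=q_2=0$.

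Next I would reformulate ``$\vec v$ is bad for some partition'' so that it no longer mentions the partition. Writing a dependence as $A+B+C=0$ with $A=q_1p_{Z_1}$, $B=q_2p_{Z_2}$, $C=q_3p_{Z_3}$, the three polynomials $A,B,C$ are nonzero of degree $<k$ and satisfy $ABC=q_1q_2q_3\prod_{j=1}^{2k}(x-\alpha_j)$, so $P_{\vec v}(x):=\prod_{j=1}^{2k}(x-\alpha_j)$ divides $ABC$; cancelling $\gcd(A,B,C)$ one may further take $A,B,C$ pairwise coprime, and then each of the $2k$ roots of $P_{\vec v}$ is a root of exactly one of $A,B,C$, which is how the partition $Z_i=\{j:A_i(\alpha_j)=0\}$ is recovered. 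Hence every bad nonrepetitive $\vec v$ admits nonzero $A,B,C\in\mathbb{F}_q^{<k}[x]$ with $A+B+C=0$ and $P_{\vec v}\mid ABC$; since $\deg(ABC)\le 3(k-1)$ this forces $2k\le 3(k-1)$, i.e., $k\ge 3$ — consistent with the statement being vacuous for $k=2$. That the bad locus is a proper subvariety is immediate: independence for a fixed partition is a non-empty Zariski-open condition on $\vec v$, which one sees either by exhibiting one good choice of the $\alpha_j$ or by a standard dimension count, so each $D_{(Z)}$ is a nonzero polynomial.

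It remains to count. A bad nonrepetitive $\vec v$ is an ordered tuple of $2k$ distinct $\mathbb{F}_q$-roots of some polynomial $ABC$ as above; since $(A,B,C)$ up to a common scalar is determined by $(A,B)\in\mathbb{F}_q^{<k}[x]^2$ up to scalars — of which there are $O(q^{2k-1})$ — and $ABC$ has at most $3(k-1)$ roots, there are $O_k(q^{2k-1})$ bad $\vec v$ in all. Obtaining the sharp constant $k(k-1)$ is the delicate part: one charges each bad $\vec v$ to its canonical witnessing triple (the pairwise-coprime one, which once the partition is fixed is unique up to scalars) and then analyzes exactly how the $2k$ prescribed roots may be allocated among $A$, $B$, $C$. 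This is the content of F\"uredi's Lemma~10.3 and Corollary~10.4~\cite{FurediCance2012}, and it is the main obstacle: the crude estimates — a union bound over the roughly $3^{2k}$ partitions, a Schwartz--Zippel bound applied to each $D_{(Z)}$, or the direct enumeration of factorizations $A+B+C=0$ just sketched — all give the correct order $q^{2k-1}$ but lose a factor exponential in $k$; only the careful combinatorial bookkeeping brings the constant down to $k(k-1)$.
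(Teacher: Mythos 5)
This lemma is imported verbatim from F\"uredi \cite{FurediCance2012}; the paper offers no proof of its own, only the citation, so the only question is whether your argument actually establishes the stated bound. It does not, and you say so yourself: after setting up the determinant criterion and the reformulation via $A+B+C=0$ with $P_{\vec v}\mid ABC$, you concede that every route you sketch (union bound over the $\le 3^{2k}$ partitions combined with Schwartz--Zippel on each $D_{(Z)}$, or direct enumeration of the factorizations) yields a constant that is exponential in $k$, and that reaching $k(k-1)$ ``is the content of F\"uredi's Lemma~10.3 and Corollary~10.4.'' That is a citation of the result to be proved, not a proof of it; the quantitative heart of the lemma is missing. (For what it is worth, the paper's application in Lemma~\ref{stronghashing-construction} only needs \emph{some} constant $C(r,k)$ in front of $q^{2k-1}$, since $q$ is ultimately taken larger than any function of $r$ and $k$; but the statement you were asked to prove asserts $k(k-1)$, and your argument does not deliver it.)

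There is also a secondary gap earlier in the argument: you assert that each $D_{(Z)}$ is a nonzero polynomial in $\alpha_1,\ldots,\alpha_{2k}$ ``either by exhibiting one good choice of the $\alpha_j$ or by a standard dimension count,'' but neither is carried out, and neither is automatic. Pairwise coprimality of $p_{Z_1},p_{Z_2},p_{Z_3}$ (which distinctness of the $\alpha_j$ does give) does \emph{not} by itself imply $(k_1,k_2,k_3)$-independence --- already for $k=3$ and $|Z_i|=2$ the question is whether three monic quadratics with disjoint root sets are linearly independent, which genuinely requires an argument. Without establishing $D_{(Z)}\not\equiv 0$ you cannot even conclude that the bad locus is a proper subvariety, so even the weak $O_k(q^{2k-1})$ bound is not fully justified as written. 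The structural observations you do prove correctly (all three $q_i$ nonzero in any nontrivial dependence; the constraint $2k\le 3(k-1)$ explaining vacuousness for $k=2$) are sound, but they are the easy part.
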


The following facts are easy to verify.

\begin{fact}\label{simplelemma}
  Let $\vec{v}=(\alpha_1,\ldots,\alpha_r)\in\mathbb{F}_q^r$ be a nonrepetitive vector and $X\s[r]$. If the  polynomials $f_1,f_2$ satisfy $f_1(\alpha_i)=f_2(\alpha_i)$ for each $i\in X$, then
  $$p_X(x,\vec{v}) \mid f_1-f_2.$$
\end{fact}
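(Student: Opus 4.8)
The plan is to observe that this is an immediate consequence of unique factorization in the polynomial ring $\mathbb{F}_q[x]$ (which is in fact Euclidean), combined with the sole structural hypothesis at our disposal, namely that the entries of $\vec{v}$ are pairwise distinct. First I would set $g(x):=f_1(x)-f_2(x)\in\mathbb{F}_q[x]$, so that the assertion $p_X(x,\vec{v})\mid f_1-f_2$ becomes $p_X(x,\vec{v})\mid g$. If $X=\emptyset$ then $p_X(x,\vec{v})=1$ by the empty-product convention and there is nothing to prove, so we may assume $X\neq\emptyset$.

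By hypothesis $g(\alpha_i)=f_1(\alpha_i)-f_2(\alpha_i)=0$ for every $i\in X$, so each $\alpha_i$ with $i\in X$ is a root of $g$; equivalently, by the factor theorem, the linear polynomial $x-\alpha_i$ divides $g$ in $\mathbb{F}_q[x]$. Since $\vec{v}$ is nonrepetitive, the set $\{\alpha_i:i\in X\}$ consists of $|X|$ pairwise distinct elements of $\mathbb{F}_q$, and hence the monic linear polynomials $\{x-\alpha_i:i\in X\}$ are pairwise coprime (two distinct monic linear polynomials have no common root, so in the UFD $\mathbb{F}_q[x]$ they are non-associate irreducibles, hence coprime). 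The standard fact that pairwise coprime divisors of a polynomial have their product dividing that polynomial then yields $\prod_{i\in X}(x-\alpha_i)=p_X(x,\vec{v})\mid g$, which is exactly the claim.

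I expect no genuine obstacle here: the only place a hypothesis is used beyond elementary polynomial arithmetic is the pairwise distinctness of the $\alpha_i$ for $i\in X$, which is precisely the nonrepetitive assumption and which is exactly what forces the full product of $|X|$ linear factors (rather than some proper divisor) to divide $g$. As an alternative I would note that one can avoid invoking coprimality altogether and argue by induction on $|X|$: if $\prod_{i\in X'}(x-\alpha_i)\mid g$ for $X'=X\setminus\{i_0\}$, write $g=\big(\prod_{i\in X'}(x-\alpha_i)\big)h$ and evaluate at $\alpha_{i_0}$; since $\alpha_{i_0}\neq\alpha_i$ for all $i\in X'$, the leading factor is nonzero at $\alpha_{i_0}$, forcing $h(\alpha_{i_0})=0$ and hence $(x-\alpha_{i_0})\mid h$, which completes the step. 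Either route occupies only a few lines.
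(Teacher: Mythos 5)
Your proof is correct and is exactly the routine verification the paper has in mind (the paper states this as a Fact and omits the argument as ``easy to verify''): the factor theorem gives $(x-\alpha_i)\mid f_1-f_2$ for each $i\in X$, and the nonrepetitiveness of $\vec{v}$ makes these linear factors pairwise coprime, so their product $p_X(x,\vec{v})$ divides $f_1-f_2$. Both of your routes (coprimality in the UFD $\mathbb{F}_q[x]$, or induction on $|X|$ peeling off one root at a time) are standard and complete; nothing further is needed.
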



\begin{fact}\label{mds}
 Two distinct polynomials of degree less than $k$ can agree in at most $k-1$ points in $\mathbb{F}_q$.
\end{fact}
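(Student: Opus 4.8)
The plan is to reduce the claim to the standard fact that a nonzero univariate polynomial of degree $d$ over a field has at most $d$ roots. First I would set $g(x) := f_1(x) - f_2(x)$, where $f_1 \neq f_2$ are the two given polynomials of degree less than $k$. Then $g$ is a nonzero polynomial, and since both $f_1$ and $f_2$ have degree at most $k-1$, so does $g$ (the leading coefficients may cancel, but this only lowers the degree). The set of points $\alpha \in \mathbb{F}_q$ at which $f_1$ and $f_2$ agree is precisely the set of roots of $g$ in $\mathbb{F}_q$, so it suffices to bound the number of roots of $g$.

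Next I would invoke (or, for completeness, prove) the root-counting bound: a nonzero polynomial $g \in \mathbb{F}_q[x]$ of degree $d$ has at most $d$ roots in $\mathbb{F}_q$. This is the usual factor-theorem induction: if $\alpha$ is a root then $g(x) = (x-\alpha)h(x)$ with $\deg h = d-1$, and every other root of $g$ is a root of $h$ since $\mathbb{F}_q$ has no zero divisors; induction on $d$ finishes it, the base case $d = 0$ being a nonzero constant with no roots. Applying this with $d = \deg g \le k-1$ shows that $g$ has at most $k-1$ roots, hence $f_1$ and $f_2$ agree in at most $k-1$ points of $\mathbb{F}_q$.

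There is essentially no obstacle here; the only point worth a remark is that $\deg g$ may be strictly smaller than $k-1$ when leading terms cancel, but since $k-1$ is being claimed merely as an upper bound this causes no difficulty. One could alternatively derive the fact directly from Fact \ref{simplelemma}: if $f_1, f_2$ agreed on a set $X \subseteq \mathbb{F}_q$ with $|X| = k$, extend $X$ to a nonrepetitive vector $\vec v$ and note that $p_X(x,\vec v) \mid f_1 - f_2$ with $\deg p_X(x,\vec v) = k > \deg(f_1-f_2)$ forces $f_1 - f_2 \equiv 0$, a contradiction; but the self-contained argument above is the cleanest route.
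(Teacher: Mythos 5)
Your proof is correct and follows the standard route the paper implicitly intends (the paper states this as an unproved ``easy to verify'' fact): pass to the nonzero difference polynomial $g=f_1-f_2$ of degree at most $k-1$ and apply the bound that a nonzero polynomial of degree $d$ over a field has at most $d$ roots. Nothing further is needed.
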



\subsection{Constructing strongly $3$-perfect hashing matrices}\label{matrixconstrucion}
\noindent In this subsection we show the existence of strongly $3$-perfect hashing matrices over large enough finite fields (see Lemma \ref{stronghashing-construction}).

We view the  $k$-dimensional subspace $\mathbb{F}_q^{<k}[x,\vec{v}]$ defined in \eqref{subspace} as an  $r\times q^k$ matrix whose columns are labeled by the polynomials in $\mathbb{F}_q^{<k}[x]$, such that a column with index $f\in\mathbb{F}_q^{<k}[x]$ is the vector $\vec{c}_f$ defined in \eqref{subspace}.

\begin{lemma}\label{stronghashing-construction}
  Let $r>k\ge 2$ be fixed integers and $q>4^r k^2$ be a prime power.
  Then for at least  $q^r-4^rk^2q^{r-1}$ vectors $\vec{v}\in\mathbb{F}_q^r$,
  the matrix $\mathbb{F}_q^{<k}[x,\vec{v}]$ is strongly $3$-perfect hashing.
\end{lemma}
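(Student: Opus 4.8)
The plan is to count the "bad" vectors $\vec{v}\in\mathbb{F}_q^r$ for which the matrix $\mathbb{F}_q^{<k}[x,\vec{v}]$ fails to be strongly $3$-perfect hashing, and show this count is at most $4^r k^2 q^{r-1}$. First I would dispose of the vectors $\vec{v}$ that are not nonrepetitive: there are at most $\binom{r}{2}q^{r-1}$ of them, which is comfortably absorbed into the error term. So from now on assume $\vec{v}=(\alpha_1,\dots,\alpha_r)$ has pairwise distinct entries. Next, fix three distinct columns, indexed by polynomials $f_1,f_2,f_3\in\mathbb{F}_q^{<k}[x]$, and let $\vec{c}_i=\vec{c}_{f_i}$. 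The set $I(\vec{c}_1,\vec{c}_2,\vec{c}_3)$ is exactly the set of indices $i\in[r]$ where $f_1(\alpha_i)=f_2(\alpha_i)=f_3(\alpha_i)$, and a row $i$ \emph{fails} to separate the three columns precisely when at least two of $f_1(\alpha_i),f_2(\alpha_i),f_3(\alpha_i)$ coincide. I want to show: for all but few $\vec{v}$, the number of non-separating rows is at most $2k-|I|-1$ (equivalently, at least $r-2k+|I|+1$ rows separate), for every such triple of columns.

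The key reduction is to pass from columns to polynomials. Write $g_{12}=f_1-f_2$, $g_{13}=f_1-f_3$, $g_{23}=f_2-f_3$; these are three (nonzero, since the columns are distinct only if the polynomials are) polynomials in $\mathbb{F}_q^{<k}[x]$ with $g_{12}=g_{13}-g_{23}$, wait — more usefully $g_{12}+g_{23}=g_{13}$. A row $i$ is non-separating iff $\alpha_i$ is a root of at least one of $g_{12},g_{13},g_{23}$, and $i\in I$ iff $\alpha_i$ is a common root of all three (equivalently of two of them). Let $Z$ be the set of indices where row $i$ is non-separating, partitioned according to \emph{which} pair agrees; by Fact \ref{mds} the entries $\{\alpha_i:i\in Z\}$ lie in the union of the root sets of $g_{12},g_{13},g_{23}$, each of size at most $k-1$. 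The heart of the argument is to invoke Lemma \ref{lowerfirst}: if the $\alpha_i$ for $i\in Z$ were to witness a failure, one can extract from $Z$ a subset of size close to $2k$ on which $p_{Z_1}(x,\vec{v}),p_{Z_2}(x,\vec{v}),p_{Z_3}(x,\vec{v})$ fail to be $(k-|Z_1|,k-|Z_2|,k-|Z_3|)$-independent — because the annihilator polynomials divide the corresponding $g$'s (Fact \ref{simplelemma}), and an $\mathbb{F}_q$-linear dependence among the $g$'s of the form $g_{12}+g_{23}-g_{13}=0$ translates, after dividing out the common factors, into a nontrivial $(k_1,k_2,k_3)$-dependence among the annihilators. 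Lemma \ref{lowerfirst} says this can happen only for a $\vec{v}$ whose restriction to some $2k$-subset of coordinates lies in an exceptional set of size at most $k(k-1)q^{2k-1}$. Summing over the $\binom{r}{2k}$ choices of which $2k$ coordinates (and over the bounded number of partition/pattern types), the total number of bad $\vec{v}$ is at most $\binom{r}{2k}\cdot(\text{const depending on }k)\cdot k(k-1)q^{2k-1}\cdot q^{r-2k}$, plus the non-repetitive contribution, which I would bound crudely by $4^r k^2 q^{r-1}$ using $\binom{r}{2k}\le 2^r$ and absorbing small constant factors; the hypothesis $q>4^rk^2$ then guarantees this is less than $q^r$, so a good $\vec{v}$ exists and in fact most do.

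The main obstacle I anticipate is the bookkeeping in the reduction to Lemma \ref{lowerfirst}: one must carefully argue that a non-separation event of the \emph{right} combinatorial type (more than $2k-|I|-1$ non-separating rows) forces, for a suitable $2k$-element index set $W\supseteq$ (part of $Z$) and a suitable partition $W=Z_1\cup Z_2\cup Z_3$ recording which of the three polynomial differences vanishes, that $p_{Z_1},p_{Z_2},p_{Z_3}$ are \emph{not} $(k-|Z_i|)$-independent — the subtlety being (a) choosing $W$ of size exactly $2k$ when $|Z|$ could be larger or equal to $2k$ only after using $|I|\ge 0$ and the degree bounds $\deg g_{ij}<k$, (b) handling the overlap set $I$ so that the sizes $|Z_i|$ are all in $[1,k)$ as Lemma \ref{lowerfirst} requires, splitting off degenerate cases (e.g. when one $g_{ij}$ is forced to have a specific low degree, or when two of the polynomials share many roots) where the degree bound already gives the conclusion directly, and (c) verifying that the linear relation $g_{12}+g_{23}=g_{13}$ genuinely descends to a \emph{nontrivial} relation among the quotients $g_{ij}/p_{Z_i}$, which needs the $q_i(x)$ in the definition of independence to be nonzero — this follows from $g_{ij}\neq 0$. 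Once this structural claim is nailed down, the counting is routine.
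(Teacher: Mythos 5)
Your proposal follows essentially the same route as the paper's proof: discard the repetitive vectors, translate non-separation of a triple of columns into roots of the pairwise differences $f_i-f_j$, use Fact \ref{simplelemma} together with the identity $(f_1-f_2)+(f_2-f_3)+(f_3-f_1)=0$ to manufacture a nontrivial non-independence relation among annihilator polynomials, and then apply Lemma \ref{lowerfirst} restriction-by-restriction and count over the choices of index sets. The bookkeeping obstacle you flag is resolved in the paper exactly as you anticipate: one first shows $|I|<k-1$, chooses a $(2k-2|I|)$-subset $Y\subseteq X\setminus I$ rather than a full $2k$-subset, divides out the common factor $p_I(x,\vec{v})$ from the three relations, and applies Lemma \ref{lowerfirst} with $k-|I|$ in place of $k$, which forces each part $Z_i$ to have size in $[1,k-|I|)$.
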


\begin{proof}
Call a vector $\vec{v}\in\mathbb{F}_q^r$ {\it bad} if the corresponding matrix $\mathbb{F}_q^{<k}[x,\vec{v}]$ is not strongly $3$-perfect hashing, and observe that there are at most $\binom{r}{2}q^{r-1}$ vectors in $\mathbb{F}_q^r$ with at least two repeated entries.
To prove the lemma it suffices to show that the number of bad nonrepetitive vectors in $\mathbb{F}_q^r$ is at most $4^rk(k-1)q^{r-1}$, and therefore the total number of bad vectors is bounded from above by  $\binom{r}{2}q^{r-1}+4^rk(k-1)q^{r-1}\le 4^rk^2q^{r-1}$.

We say that a vector $\vec{v}\in\mathbb{F}_q^r$ is bad for a set of row indices $I\s[r]$, if there exist three distinct columns $\vec{c}_1,\vec{c}_2,\vec{c}_3$ of $\mathbb{F}_q^{<k}[x,\vec{v}]$  such that $I=I(\vec{c}_1,\vec{c}_2,\vec{c}_3)$ and they  are separated by at most $r-2k+|I|\geq 0$ rows.
Clearly, a vector $\vec{v}\in\mathbb{F}_q^r$ is bad if it is bad for  some set $I\s[r]$.
For a given subset $I\s[r]$, below we give an upper bound on the number of  nonrepetitive vectors that are bad for it.

Assume that  $\vec{v}\in\mathbb{F}_q^r$ is a nonrepetitive vector which is bad for $I\s[r]$, and let
$\vec{c}_1,\vec{c}_2,\vec{c}_3\in\mathbb{F}_q^{<k}[x,\vec{v}]$ be three distinct columns that violate the strongly $3$-perfect hashing property.
Let $$X:=I(\vec{c}_1,\vec{c}_2)\cup I(\vec{c}_2,\vec{c}_3)\cup I(\vec{c}_1,\vec{c}_3)$$ be the set of rows for which at least two  columns  attain the same value, and therefore $I\subseteq X$.
Since the three columns  are separated by any row whose index is not in  $X$, then  $r-|X|\le r-2k+|I|$, namely,
\begin{equation}
|X|\ge 2k-|I|.
\label{eq:234}
\end{equation}
By Fact \ref{mds} $|I|\leq k-1$. If $|I|=k-1$, then again by Fact \ref{mds} $I=X$ which contradicts \eqref{eq:234}. Therefore, we assume that $|I|<k-1$.
Note that by \eqref{eq:234} $|X\setminus I|=|X|-|I|\ge 2k-2|I|>0$, and let $Y\s X\setminus I$ be an arbitrary subset of  size $2k-2|I|$.
Next, define the following three sets $Z_i$ that form a partition of $Y$:
$$Z_1=\{i\in Y:i\in I(\vec{c}_1,\vec{c}_2)\},~Z_2=\{i\in Y:i\in I(\vec{c}_2,\vec{c}_3)\}
\text{ and }Z_3=\{i\in Y:i\in I(\vec{c}_1,\vec{c}_3)\}.$$
The sets $Z_i$ satisfy  the following claim.
\begin{claim}\label{connection}
For $i=1,2,3$, $1\leq |Z_i|<k-|I|$ and the polynomials $p_{Z_1}(x,\vec{v}),p_{Z_2}(x,\vec{v}),p_{Z_3}(x,\vec{v})$ are not
    $$(k-|I|-|Z_1|,k-|I|-|Z_2|,k-|I|-|Z_3|)\text{-independent}.$$
\end{claim}

\noindent{\it Proof of Claim \ref{connection}.}
Since $k-|I|\geq 2$ the inequalities on the sizes of the sets $Z_i$ are well-defined. The sets $I$ and $Z_1$  are disjoint and are subsets of  $I(\vec{c}_1,\vec{c}_2)$, therefore $|Z_1|+|I|\leq |I(\vec{c}_1,\vec{c}_2)|<k$ which implies the upper bound.  The proof for $Z_2,Z_3$ is the same. Furthermore, if one of the $Z_i$'s is the empty set, say $Z_3$, then this would imply that
$2k-2|I|=|Y|=|Z_1|+|Z_2|$, i.e., either $Z_1$ or $Z_2$ is of size at least $k-|I|$, which is a contradiction.

Assume that $\vec{c}_1,\vec{c}_2,\vec{c}_3$ are indexed by polynomials $f_1,f_2,f_3\in\mathbb{F}_q^{<k}[x]$, respectively.
  Since
  \begin{equation*}
    \begin{aligned}
      (Z_1\cup I)\s I(\vec{c}_1,\vec{c}_2),~(Z_2\cup I)\s I(\vec{c}_2,\vec{c}_3) \text{ and } (Z_3\cup I)\s I(\vec{c}_1,\vec{c}_3),
    \end{aligned}
  \end{equation*}
  then by Fact \ref{simplelemma},
\begin{equation}\label{eq1}
  \begin{aligned}
        &p_{Z_1}(x,\vec{v})p_{I}(x,\vec{v})=p_{Z_1\cup I}(x,\vec{v})\mid f_1-f_2,\\
        &p_{Z_2}(x,\vec{v})p_{I}(x,\vec{v})=p_{Z_2\cup I}(x,\vec{v})\mid f_2-f_3,\\
        &p_{Z_3}(x,\vec{v})p_{I}(x,\vec{v})=p_{Z_3\cup I}(x,\vec{v})\mid f_3-f_1,\\
  \end{aligned}
\end{equation}
which implies that there exist nonzero polynomials $q_i(x)\in\mathbb{F}_q^{<k-|I|-|Z_i|}[x]$ for $1\le i\le 3$, such that
\begin{equation}\label{eq5}
  \begin{aligned}
     &q_1(x)p_{Z_1}(x,\vec{v})p_{I}(x,\vec{v})=f_1-f_2,\\
     &q_2(x)p_{Z_2}(x,\vec{v})p_{I}(x,\vec{v})=f_2-f_3, \\
     &q_3(x)p_{Z_3}(x,\vec{v})p_{I}(x,\vec{v})= f_3-f_1.\\
  \end{aligned}
\end{equation}
By summing the left and right hand sides of \eqref{eq5} we conclude  that
\begin{equation}
  \begin{aligned}
    \big(q_1(x)p_{Z_1}(x,\vec{v})+q_2(x)p_{Z_2}(x,\vec{v})+q_3(x)p_{Z_3}(x,\vec{v})\big)\cdot p_{I}(x,\vec{v})=0.
  \end{aligned}
\end{equation}
The ring of polynomials $\mathbb{F}_q[x]$ is a domain, implying that
$$q_1(x)p_{Z_1}(x,\vec{v})+q_2(x)p_{Z_2}(x,\vec{v})+q_3(x)p_{Z_3}(x,\vec{v})=0,$$
namely, $p_{Z_1}(x,\vec{v}),p_{Z_2}(x,\vec{v}),p_{Z_3}(x,\vec{v})\text{ are not }(k-|I|-|Z_1|,k-|I|-|Z_2|,k-|I|-|Z_3|)\text{-independent}$, completing the proof of the claim. $\hfill\square$

\vspace{10pt}

\noindent{\it\textbf{Continuing the proof of Lemma \ref{stronghashing-construction}:}}
If  $\vec{v}\in\mathbb{F}_q^r$ is a nonrepetitive vector which is bad for $I$, then there exists a $(2k-2|I|)$-subset $Y\s[r]\setminus I$ for which the assertion of Claim \ref{connection} holds. However, Lemma \ref{lowerfirst} provides an upper bound on the number of such vectors $\vec{v}\mid_{Y}\in\mathbb{F}_q^{2k-2|I|}$, where $\vec{v}\mid_{Y}$ is the restriction of $\vec{v}$ to the coordinates in $Y$. More precisely, by Lemma \ref{lowerfirst} there are at most
$$(k-|I|)(k-|I|-1)q^{2k-2|I|-1}\le k(k-1)q^{2k-2|I|-1}$$
possible choices for $\vec{v}\mid_{Y}$.
Thus,  given the sets $I$ and $Y$, the number of nonrepetitive vectors $\vec{v}\in\mathbb{F}_q^r$
which are bad for $I$ is at most
$$q^{|I|}\times k(k-1)q^{2k-2|I|-1}\times q^{r-|I|-(2k-2|I|)}=k(k-1)q^{r-1}.$$
Indeed, there are at most $q^{|I|}$ ways to pick $\vec{v}\mid_{I}$, at most $k(k-1)q^{2k-2|I|-1}$ ways to pick $\vec{v}\mid_{Y}$,
and at most $q^{r-|I|-(2k-2|I|)}$ ways to pick the remaining entries in $\vec{v}\mid_{[r]\setminus (I\cup Y)}$.

Since $I$ and $Y$ are  subsets of $[r]$, then there are at most $(2^r)^2$ ways to choose them.
To conclude, the total number of bad nonrepetitive vectors is at most $4^rk(k-1)q^{r-1}$, as desired.
\end{proof}

\subsection{Establishing the lower bound of $\pi_d(r,k,3)$}\label{induction}

\noindent Let $r>k\ge 2$ be fixed integers, in this subsection we will present the proof of the lower bound $\frac{1}{r^k-r}\le\liminf_{n\rightarrow\infty}\frac{f_r(n,3r-2k,3)}{n^k}$, finishing the comments after Construction \ref{outlineofConstruction2}.
We need the following result on the distribution of primes.

\begin{lemma}[see Theorem 1 \cite{BakerPrimeNumber}]\label{prime}
  There exists a positive integer $n_0$ such that for any integer $n>n_0$, the largest prime $q$ not exceeding $n$ satisfies $q\ge n-n^{\delta}$, where $0<\delta\le 0.525$.
\end{lemma}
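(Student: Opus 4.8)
The plan is not to reprove Lemma \ref{prime} from scratch: it is precisely the theorem of Baker, Harman and Pintz on primes in short intervals, and in this paper it is used purely as a black box. Accordingly, I would only indicate the shape of the argument behind it and, more importantly, isolate how much of it the inductive construction of Section \ref{secondlower} actually consumes. The cited bound rests on three classical ingredients, which I would assemble in this order. First, the explicit formula (Perron's formula applied to $-\zeta'/\zeta$) expresses $\psi(n)-\psi(n-n^{\delta})$ as $n^{\delta}$ minus a sum over the nontrivial zeros $\rho$ of $\zeta(s)$ of terms of size roughly $n^{\delta}\cdot n^{\Re\rho-1}$ in the relevant range, so a prime lies in $(n-n^{\delta},n]$ as soon as that zero-sum beats the main term. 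Second, zero-density estimates — bounds on $N(\sigma,T)$, the number of zeros with $\Re s\ge\sigma$ and $|\Im s|\le T$ — show that zeros with real part near $1$ are scarce, which is what forces the zero-sum to be of lower order. Third, one replaces the crude handling of the von Mangoldt function by a sieve decomposition (Harman's sieve): split $\Lambda$ through Vaughan/Heath--Brown identities into Type I and Type II bilinear sums, bound their mean values by large-values theorems for Dirichlet polynomials, and use the sieve to control the otherwise intractable contribution of integers with exactly two prime factors. Optimizing the exponent across all of these inputs is what produces $\delta\le0.525$.

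The step I expect to be the genuine obstacle — and the reason one does not attempt a self-contained proof inside a combinatorics paper — is exactly this last optimization of the zero-density and large-values estimates in tandem with the sieve: it is hard analytic number theory, and even improving the exponent $0.525$ is a well-known open problem. The honest move here is to cite it and move on.

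Fortunately, none of it is needed at full strength for the application. The only use of Lemma \ref{prime} is in step (1) of Construction \ref{outlineofConstruction2}: one takes $q$ to be the largest prime power with $rq\le n$ and wants the resulting $r$-graph $\ma{F}$ on $rq$ vertices (padded with isolated vertices) to still carry at least $\frac{n^k}{r^k-r}-an^{k-1}$ edges. Since $\ma{F}$ has at least $\frac{(rq)^k}{r^k-r}-arq^{k-1}$ edges, all that is required is that $rq$ be close to $n$: applying Lemma \ref{prime} to $\lfloor n/r\rfloor$ gives $rq\ge n-O(n^{\delta})$ with $\delta<1$, hence $n^k-(rq)^k=O(n^{k-1+\delta})$, a power-saving gap that a suitable additive error term absorbs and that keeps the induction tidy. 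If one only wants the $\liminf$ statement of Theorem \ref{secondtheorem}, then even the Prime Number Theorem — giving merely $rq=(1-o(1))n$, whence $(rq)^k=(1-o(1))n^k$ and $arq^{k-1}=o(n^k)$ — already yields $\liminf_{n\to\infty}f_r(n,3r-2k,3)/n^k\ge\frac{1}{r^k-r}$; the stronger Baker--Harman--Pintz bound is invoked only for the cleaner bookkeeping in the detailed induction.
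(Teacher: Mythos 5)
Your treatment matches the paper's exactly: Lemma \ref{prime} is the Baker--Harman--Pintz theorem, quoted as a black box with no proof given in the paper, and your sketch of its analytic ingredients plus your accounting of how the application in Claim \ref{lastclaim} consumes it (only a power-saving gap $n-rq=O(n^{\delta})$ is needed to keep the error term $an^{k-1+\delta}$ closing the induction) is accurate. Nothing further is required.
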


The desired lower bound is a straightforward consequence of the following claim.

\begin{claim}\label{lastclaim}
  Let $r>k\ge 2$ be fixed integers and $n_0,\delta$ be the constants given by Lemma \ref{prime}.
  Then there exists a positive constant $a=a(r,k,n_0)$ such that for any $n\ge 1$ there exists  a $\mr{G}_r(3r-2k,3)$-free $r$-graph $\ma{F}$ on $n$ vertices with at least $$\frac{n^k}{r^k-r}-an^{k-1+\delta}$$ edges, such that for any distinct $A,B\in\ma{F}$, $|A\cap B|\le k-1$.
\end{claim}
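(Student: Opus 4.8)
The plan is to prove this by induction on $n$, using Construction \ref{outlineofConstruction2} as the inductive step and absorbing the error term via the distribution-of-primes estimate in Lemma \ref{prime}. For the base case, one picks $a$ large enough so that for all $n$ below some threshold $n_1$ (to be specified), the bound $\frac{n^k}{r^k-r}-an^{k-1+\delta}$ is non-positive, so the empty hypergraph vacuously works; here $n_1$ must be chosen at least as large as the threshold required for Lemma \ref{stronghashing-construction} (namely, we need the largest prime power $q$ with $rq\le n$ to satisfy $q>4^rk^2$) and at least $n_0$ from Lemma \ref{prime}. For the inductive step, assume the statement holds for all positive integers smaller than $n$, where $n\ge n_1$.

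First I would set $q$ to be the largest prime with $rq\le n$; by Lemma \ref{prime} applied to $\lfloor n/r\rfloor$ we get $q\ge \lfloor n/r\rfloor - \lfloor n/r\rfloor^{\delta} \ge n/r - n^{\delta}$ (up to adjusting constants), and since $n\ge n_1$ we have $q>4^rk^2$, so Lemma \ref{stronghashing-construction} yields an $r\times q^k$ strongly $3$-perfect hashing matrix $\ma{M}$, which by Lemma \ref{matr} induces a $\mr{G}_r(3r-2k,3)$-free $r$-partite $r$-graph $\ma{H}_{\ma{M}}$ on vertex parts $V_1,\dots,V_r$ of size $q$ with $q^k$ edges, each pair of edges meeting in at most $k-1$ vertices (indeed by Fact \ref{mds}, as any two distinct columns agree in at most $k-1$ rows). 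Next, apply the induction hypothesis on each $V_i$ (since $q<n$) to get a $\mr{G}_r(3r-2k,3)$-free $r$-graph $\ma{H}_i$ with at least $\frac{q^k}{r^k-r}-aq^{k-1+\delta}$ edges, each pair of whose edges meets in at most $k-1$ vertices. Set $\ma{F}:=(\cup_{i=1}^r\ma{H}_i)\cup\ma{H}_{\ma{M}}$ on the remaining vertices padded out to exactly $n$ if needed (extra isolated vertices only help).

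The two things to verify are the freeness and the edge count. For the count: the edges are genuinely disjoint since an edge of $\ma{H}_{\ma{M}}$ meets every part $V_i$ while an edge of $\ma{H}_i$ lies inside $V_i$, so $|\ma{F}| \ge q^k + r\big(\frac{q^k}{r^k-r}-aq^{k-1+\delta}\big) = \frac{(rq)^k}{r^k-r} - arq^{k-1+\delta}$; then using $rq\ge n - rn^{\delta}$ one expands $\frac{(rq)^k}{r^k-r}\ge \frac{n^k}{r^k-r} - O(n^{k-1+\delta})$ and $arq^{k-1+\delta}\le a n^{k-1+\delta}/r^{k-2+\delta}$, and choosing $a$ large enough (depending only on $r,k,n_0$) the two error terms combine to at most $an^{k-1+\delta}$ — this is a routine but slightly delicate bookkeeping of constants, and is where one actually pins down how large $a$ and $n_1$ must be. For the freeness: given three distinct edges $A,B,C\in\ma{F}$, if all three lie in the same $\ma{H}_i$ we are done by induction; if all three are in $\ma{H}_{\ma{M}}$ we are done by Lemma \ref{matr}; the only mixed cases to rule out are when at least one edge comes from some $\ma{H}_i$ and the configuration could be small. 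Here I would argue that an edge of $\ma{H}_i$ is disjoint from every edge of $\ma{H}_j$ for $j\ne i$, and meets any edge of $\ma{H}_{\ma{M}}$ in at most $1$ vertex (only the single vertex in $V_i\cap A_{\ma{M}}$), and also edges within a single $\ma{H}_i$ pairwise meet in $\le k-1$ vertices; combining these intersection bounds, the union of any three edges involving at least one "internal" edge has size at least $3r - 2(k-1) > 3r-2k$, so $\ma{F}$ is $\mr{G}_r(3r-2k,3)$-free. Finally, the pairwise intersection bound $|A\cap B|\le k-1$ for $\ma{F}$ follows by assembling the three cases (both in one $\ma{H}_i$: induction; both in $\ma{H}_{\ma{M}}$: Fact \ref{mds}; one in each: intersection $\le 1\le k-1$). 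The main obstacle is purely the constant-chasing in the edge-count step — making sure a single constant $a$, chosen after $r,k,n_0$ but before the induction starts, simultaneously handles the base case and survives the recursion; everything else is a clean case analysis.
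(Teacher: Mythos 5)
Your proposal is correct and follows essentially the same route as the paper: induction on $n$ with the base case absorbed by choosing $a$ large, Lemma \ref{prime} to pick $q$, Lemmas \ref{stronghashing-construction} and \ref{matr} for the cross-part graph $\ma{H}_{\ma{M}}$, recursion inside each part, and the same edge-count bookkeeping. Your explicit case analysis for freeness (which the paper omits as ``routine'') is sound, though the worst mixed case gives $|A\cup B\cup C|\ge 3r-(k+1)$ rather than $3r-2(k-1)$; this still exceeds $3r-2k$ for all $k\ge 2$, so the conclusion stands.
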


\begin{proof}
  We will prove the claim by induction on $n$.
  Let $n^*=n^*(r,k,n_0)$ be the smallest $n$ satisfying
  $$\frac{n}{r}> n_0\text{\quad and \quad}\frac{n}{r}-(\frac{n}{r})^{\delta}>4^rk^2,$$
and  pick large enough $a=a(r,k,n_0)$ such that $\frac{n^k}{r^k-r}-an^{k-1+\delta}<0$ for  $n\leq n^*$. For such a choice of $a$  the claim holds trivially for $n\leq n^*$.
  Let $n>n^*$ and $q$ be the largest prime not exceeding $\frac{n}{r}$, then by Lemma \ref{prime}  $q\ge\frac{n}{r}-(\frac{n}{r})^{\delta}>4^rk^2.$
  Thus by Lemma \ref{stronghashing-construction} there exists a vector $\vec{v}\in\mathbb{F}_q^r$ such that the $r\times q^k$ matrix $\ma{M}:=\mathbb{F}_q^{<k}[x,\vec{v}]$ defined by \eqref{subspace}, is strongly $3$-perfect hashing.
  By Lemma \ref{matr}, $\ma{M}$ induces a $\mr{G}_r(3r-2k,3)$-free $r$-partite $r$-graph $\ma{H}_{\ma{M}}$, with $q^k$ edges and $rq$ vertices, such that $V(\ma{H}_{\ma{M}})$ is partitioned into $r$ disjoint parts $V_1,\ldots,V_r$ with $|V_1|=\cdots=|V_r|=q$.
  Since the edges of $\ma{H}_{\ma{M}}$ are defined by the columns of $\ma{M}$, and the columns of $\ma{M}$ are essentially defined by polynomials of degree at most $k-1$, it follows easily from Fact \ref{mds} that for any distinct $A,B\in\ma{H}_{\ma{M}}$, $|A\cap B|\le k-1$.

  By induction hypothesis, there exists a $\mr{G}_r(3r-2k,3)$-free $r$-graph $\ma{H}$ on $q$ vertices with at least $\frac{q^k}{r^k-r}-aq^{k-1+\delta}$ edges, and for any distinct $A,B\in\ma{H}$, $|A\cap B|\le k-1$.
  For each $1\le i\le r$, put a copy of $\ma{H}$, denoted as $\ma{H}_i$, into the vertex set $V_i$.
  Let $\ma{F}$ be the $r$-graph formed by the union $$\ma{F}:=(\cup_{i=1}^r\ma{H}_i)\cup\ma{H}_{\ma{M}}.$$
  It is not hard to see that for any $A,B\in\ma{F}$, $|A\cap B|\le k-1$.
  Moreover, since each of $\ma{H}_1,\ldots,\ma{H}_r,\ma{H}_{\ma{M}}$ is $\mr{G}_r(3r-2k,3)$-free, it is routine to check that $\ma{F}$ is also $\mr{G}_r(3r-2k,3)$-free. We omit the details here.

  It remains to prove an appropriate lower bound for $|\ma{F}|$.
  Clearly,
  \begin{equation*}
    \begin{aligned}
      |\ma{F}|&=|\ma{H}_{\ma{M}}|+r|\ma{H}|\ge q^k+r\cdot(\frac{q^k}{r^k-r}-aq^{k-1+\delta})\\
      &=\frac{(rq)^k}{r^k-r}-arq^{k-1+\delta}\ge\frac{(n-\frac{n^{\delta}}{r^{\delta-1}})^k}{r^k-r}-ar(\frac{n}{r})^{k-1+\delta}\\
      &\ge\frac{n^k}{r^k-r}-n^{k-1+\delta}(\fr{k}{r^{k+\delta-1}-r^{\delta}}+\fr{a}{r^{k+\delta-2}}).\\
    \end{aligned}
  \end{equation*}
  A short calculation shows that for large enough $a$, $\frac{k}{r^{k+\delta-1}-r^{\delta}}+\frac{a}{r^{k+\delta-2}}\le a$.
  Therefore, we conclude that $|\ma{F}|\ge \frac{n^k}{r^k-r}-an^{k-1+\delta}$ for some appropriate $a=a(r,k,n_0)$, completing the proof of the claim.
\end{proof}

\section*{Acknowledgements}

\noindent 
To the memory of Kobe Bryant, whose spirit inspires us.  
Thanks to Xin Wang and Xiangliang Kong for reading an early version of this manuscript, and for many helpful discussions. Thanks to an anonymous reviewer for his/her careful reading and constructive comments that helped us to improve this paper. 
Lastly, the research of C. Shangguan and I. Tamo was supported by ISF grant No. 1030/15 and NSF-BSF grant No. 2015814.

{\small
\bibliographystyle{plain}
\bibliography{turan_densities}
}
 \end{document}